\documentclass[12pt,a4paper]{amsart}
\usepackage{times}
\usepackage{amscd,amssymb,amsthm,latexsym,amsmath,stackrel}
\usepackage{amsfonts,latexsym,amssymb,url}
\usepackage{pifont}
\usepackage{color}

\newtheorem{theorem}{Theorem}[section]
\newtheorem{cor}{Corollary}[section]
\newtheorem{lemma}{Lemma}[section]

\newtheorem{defi}{Definition}[section]
\newtheorem{remark}{Remark}[section]

\newcommand{\R}{{\mathbb R}}
\newcommand{\Z}{{\mathbb Z}}
\newcommand{\N}{{\mathbb N}}

\overfullrule=10pt 

\begin{document}
   \title[On Burenkov's extension operator]
   { On Burenkov's extension operator preserving  Sobolev-Morrey spaces on Lipschitz domains}
   \author{Maria Stella Fanciullo}
   \address{
Dipartimento di Matematica e Informatica, Universit\`a degli Studi di Catania, Viale Andrea Doria 6, 95125, Catania}
   \email{fanciullo@dmi.unict.it}
\author{Pier Domenico Lamberti}
   \address{Dipartimento di Matematica, Universit\`a degli Studi di Padova, via Trieste 63, 
 35121,  Padova}
   \email{lamberti@math.unipd.it}

 \keywords{Extension operator, Lipschitz domains, Sobolev and Morrey spaces}
   \subjclass[2000]{46E35,  46E30, 42B35}
   \thanks{}
\date{\today}

\linespread{1.3}

\begin{abstract} We prove that Burenkov's Extension Operator preserves So\-bo\-lev spaces built on general Morrey spaces, including classical Morrey spaces.
The analysis concerns bounded and unbounded open sets with Lipschitz boundaries in the n-dimensional Euclidean space.  
\end{abstract}
\maketitle

\section{Introduction}

The extension problem is a classical problem in the theory of function spaces with important applications in many fields of mathematical analysis, in particular  harmonic analysis 
and the theory of partial differential equations.  Broadly speaking, the problem consists in extending to the whole of $\R^n$ the elements of a space of functions defined on a given 
subset of $\R^n$, with  preservation of  certain differentiability and summability  properties. The analysis of such problem goes back to the works of  Whitney~\cite{whytannals, whyt} and Hestenes~\cite{hest} 
who considered   spaces of continuously differentiable functions.  In the case of Sobolev spaces $W^{l,p}(\Omega )$, with $l\in \N$ and $p\in [1,\infty ]$, defined on  open sets $\Omega$ in $\R^n$ with minimal boundary regularity, i.e. $\Omega $ in the Lipschitz  class $C^{0,1}$, the problem received 
important contributions by Calderon~\cite{cal}, Stein~\cite{steinorsay, steinbook} and Burenkov~\cite{burpaper2, burpaper1}. They constructed three different linear bounded extension operators from $W^{l,p}(\Omega)$ to $W^{l,p}(\R^n)$. 
Compared with the classical extention operator by Hestenes~\cite{hest}, the main striking feature of Calderon's, Stein's and Burenkov's operators consists in the fact that $\Omega $ is not required to be of class $C^l$ with $l>1$. For a discussion concerning the differences between those operators, as well as for historical remarks and other references, we refer to Burenkov~\cite{burpaper1, b}, and to  the earlier Stein's book~\cite{steinbook}. For the convenience of the reader,  we briefly mention here the main properties of such operators.   
Calderon's Extension Operator is based on an integral representation formula involving singular integral operators, hence it  does not allow to deal with the cases $p=1,\infty $. Stein's Extension Operator concerns all exponents $p\in [1,\infty ]$ and is universal in the sense that the same operator can be used for all orders of smoothness $l$. Burenkov's Extension Operator is not universal but allows dealing with all exponents   $p\in [1,\infty ]$, and also with anisotropic Sobolev spaces. Moreover,  Burenkov's  operator provides functions which are $C^{\infty }$ outside $\Omega $ and the order of growth of the derivatives of such functions, when approaching the boundary, is the best possible in some sense. We also mention 
that Burenkov's  operator allows to deal  with  open sets of class $C^{0,\gamma}$ with $\gamma <1$, in which case the target space is not $W^{l,p}(\R^n)$ but $W^{\gamma l, p}(\R^n)$. 

We denote  Burenkov's Extension Operator by $T$ and we refer to formula (\ref{burext}) for its definition in the case of an elementary Lipschitz domain $\Omega$ given by the subgraph of a Lipschitz function, and to formula   (\ref{burextgen}) for the case of general bounded or unbounded Lipschitz domains. For simplicity, we do not emphasize the dependence of $T$ on $l$ in the notation, but it is always understood.    Burenkov's Extension Operator is also described in great detail in Burenkov's book~\cite[Chap.~6]{b}, to which we shall refer in this paper for any result required in our proofs. 

We note that  the operator $T$ in (\ref{burext})   is defined by means of a sequence of mollifiers with variable steps and has a local nature in the sense that the values of the extended function $Tf$  around  a point in $ \R^n\setminus \Omega $ depend only on  the values of $f$  localized around certain `reflected' points  inside $\Omega$.   

The main aim of the present paper is to exploit the local nature of Burenkov's Extension Operator in order to prove that such operator preserves also  Sobolev-Morrey spaces. 

Given $p\in [1,\infty [$,  a  function $\phi : ]0,\infty [\to ]0,\infty [$ and $\delta \in ]0, \infty ]$, for all $f\in L^p_{loc}(\Omega )$, we set
\begin{equation*}\label{norm_generalized_morrey}
\| f \|_{M^{\phi , \delta }_p(\Omega ) }:=\sup_{x\in \Omega}\sup_{0<r<\delta }\left(\frac{1}{\phi(r)}\int_{B_r(x)\cap \Omega}|f|^pdy \right)^{\frac{1}{p}}.
\end{equation*}
We also write $M^{\phi  }_p(\Omega ) $ to denote $M^{\phi , \infty }_p(\Omega ) $. The Morrey space $M^{\phi , \delta }_p(\Omega ) $ is the space of functions 
$f\in L^p_{loc}(\Omega )$ such that $\| f \|_{M^{\phi , \delta }_p(\Omega ) }<\infty $. If $\phi (r) =r^{\lambda}  $ for all $r\in ]0,\infty [$, we obtain the classical Morrey spaces $M^{\lambda , \delta}_p(\Omega )$ (which are known to be of interest only in the case $\lambda \in [0, n]$). 

Given $l\in \N$, $p\in [1,\infty[$ and $\delta \in [0,\infty [$,  the main result of the paper is the following estimate
\begin{equation}\label{finalthm1intro}
\| D^{\alpha}Tf\|_{M^{\phi ,\delta }_p(\R^n)}\le c\sum_{0\le |\beta |\le |\alpha|} \| D^{\beta}f\|_{M^{\phi , \delta}_p(\Omega)},
\end{equation}
for all  $f\in W^{l,p}(\Omega)$ and $|\alpha|\le l$, where $c>0$ is independent of $f$. See Theorem~\ref{finalthm}.  Moreover, we  also prove that if $\Omega $ is a bounded or an elementary unbounded domain,  then $c$ can be chosen to be independent of $\delta$, hence in these cases  estimate  $(\ref{finalthm1intro})$ holds also if $\delta =\infty $. See Corollary~\ref{morrey} for the case of elementary unbounded domains.

In particular, if $f\in W^{l,p}(\Omega )$ is such that $D^{\alpha }f\in M^{\phi , \delta }_p(\Omega )$ for all $|\alpha |\le l$ then $D^{\alpha }Tf\in  M^{\phi , \delta }_p(\Omega )$ for all $|\alpha |\le l$. 

This paper is organized as follows. In section 2, we consider the case of elementary Lipschitz domains defined by the subgraphs of Lipschitz continuous functions. Section 3 is devoted to
the case of general Lipschtz open sets. 

For another contribution in this field of investigation, we refer to Khidr and Yeihia~\cite{khi} which obtain results radically different from ours.  


\section{Burenkov's Extension Operator on elementary Lipschitz domains}

In this paper the elements of $\R^n$ are denoted by  $x=(\overline x,x_n)$ with $\overline x\in\R^{n-1}$ and $x_n\in \R$. 

 If $\Omega$ is an open subset of $\R^n$, we denote by $W^{l,p}(\Omega)$  the Sobolev space of function $f\in L^p(\Omega)$ with weak derivatives $D^{\alpha}f\in L^p(\Omega)$  for all  $|\alpha |\le l $, endowed with the norm $\| f\|_{W^{l,p}(\Omega)}=\sum_{0\le |\alpha |\le l}\|D^{\alpha}f\|_{L^p(\Omega)}$.    

\subsection{The case of unbounded Lipschitz subgraphs}

In this subsection we consider elementary Lipschitz  domains   $\Omega$  in $\R^n$ of the form

\begin{equation}\label{eldom}\Omega = \{x=(\overline x, x_n)\in \R^n: x_n<\varphi(\overline x),\overline x\in \R^{n-1}\}\,,
\end{equation}
where $\varphi:\R^{n-1}\to \R$ is a Lipschitz continuous function, that is
there exists a positive constant $M$ such that
\begin{equation}\label{lip1}
|\varphi(\overline x)-\varphi(\overline y)| \le M|\overline x-\overline y|\,, \,\,\forall \overline x, \overline y\in \R^{n-1}\,.
\end{equation}

The best constant $M$ in inequality (\ref{lip1}) is the Lipschitz constant of $\varphi$ and is denoted by ${\rm Lip} \varphi$.

Let $G=\R^n\setminus \overline \Omega$.  For every  $k\in\Z$, we set
$$G_k=\{x\in G: 2^{-k-1}<\rho_n(x)\le 2^{-k}\}$$
where $\rho_n(x)=x_n-\varphi(\overline x)$
is the signed distance from $x\in \R^n$ to $\partial  G$ in the $x_n$ direction. Clearly,  $\rho_n(x)\ge 0$ for all $ x\in G$.

In the sequel, we need the following  Partition of Unity's Lemma from  \cite[Lemma~18]{b}.   Here  $\N_0$ denotes the set of natural numbers including zero. 

\begin{lemma}\label{partition}
There exists a sequence of nonnegative functions $\psi_k$ belonging to $C^{\infty}(\R^n)$, for all $k\in \Z$,  satisfying the following conditions:
\begin{itemize}
\item[(i)]
$\displaystyle{ \sum_{k=-\infty}^\infty\psi_k=\begin{cases}1, \,\, {\rm if}\,\, x\in G,\vspace{2mm}\\ 
0, \,\, {\rm if }\,\, x\notin G;\end{cases}}$

\item[(ii)]  $  G=\displaystyle \cup_{k=-\infty}^\infty {\rm supp} \psi_k $
and  the covering $\{{\rm supp} \psi_k\}_{k\in \Z}$ has multiplicity equal to $2$;

\item[(iii)]  $G_k\subset {\rm supp} \psi_k\subset G_{k-1}\cup G_k\cup G_{k+1}$, for all $k\in \Z$;

\item[(iv)]  $|D^\alpha\psi_k(x)|\le c(\alpha) 2^{k|\alpha|}$, for all $x\in \R^n,k\in \Z, \alpha\in \N^n_0$. 
\end{itemize}
\end{lemma}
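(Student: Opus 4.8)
The plan is to construct the $\psi_k$ in three stages: first a \emph{rough} Lipschitz partition modelled on the signed distance $\rho_n$, then a mollification of each piece on the scale appropriate to its position, and finally a renormalization. For the first stage I would fix a nonincreasing $\beta\in C^\infty(\R)$ with $\beta\equiv 1$ on $(-\infty,3/4]$, $\beta\equiv 0$ on $[1,+\infty)$, strictly decreasing on $(3/4,1)$, and set $\psi_k^0(x):=\beta(2^k\rho_n(x))-\beta(2^{k+1}\rho_n(x))$ for $x\in\R^n$, $k\in\Z$. Each $\psi_k^0$ is nonnegative and $\le 1$, vanishes outside $G$ (where $\rho_n\le 0$, so both terms equal $1$), and the partial sums telescope, $\sum_{|k|\le N}\psi_k^0=\beta(2^{-N}\rho_n)-\beta(2^{N+1}\rho_n)\to\mathbf{1}_G$ as $N\to\infty$. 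Inspection of the one-variable profile gives $G_k\subset{\rm supp}\,\psi_k^0\subset\{2^{-k-2}<\rho_n\le 2^{-k}\}$ and shows that at most two of the $\psi_k^0$ are nonzero at any point. The only defect is smoothness, since $\rho_n(x)=x_n-\varphi(\overline x)$ is merely Lipschitz.

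To repair this I would mollify each piece on its natural scale $\sim 2^{-k}$: with a standard mollifier $\omega\in C_c^\infty(B_1(0))$, $\omega\ge 0$, $\int\omega=1$, $\omega_\delta(y):=\delta^{-n}\omega(y/\delta)$, and a small parameter $\epsilon=\epsilon(n,M)>0$, set $\psi_k^1:=\psi_k^0*\omega_{\epsilon 2^{-k}}\in C^\infty(\R^n)$, still with $0\le\psi_k^1\le 1$. A Euclidean displacement of length $\epsilon 2^{-k}$ changes $\rho_n$ by at most $\sqrt{1+M^2}\,\epsilon 2^{-k}$, so for $\epsilon$ small one checks that ${\rm supp}\,\psi_k^1$ still lies in $G_{k-1}\cup G_k\cup G_{k+1}$ and still contains $G_k$, that ${\rm supp}\,\psi_{k-1}^1\cap{\rm supp}\,\psi_{k+1}^1=\emptyset$ (so the supports have multiplicity $2$), and that $\psi_k^1$ still vanishes outside $G$; and since differentiating a mollification at scale $\epsilon 2^{-k}$ costs a factor $(\epsilon 2^{-k})^{-|\alpha|}$, one gets $|D^\alpha\psi_k^1|\le c(\alpha)2^{k|\alpha|}$.

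Because the mollification scale depends on $k$, the sum $S:=\sum_k\psi_k^1$ is no longer $\equiv 1$ on $G$, so in the last stage I would renormalize: $\psi_k:=\psi_k^1/S$ on $G$ and $\psi_k:=0$ on $\R^n\setminus G$. The sum defining $S$ is locally finite (at most two nonzero terms near any point), hence $S\in C^\infty(G)$, and each ${\rm supp}\,\psi_k$ lies at positive distance from $\partial G$, hence $\psi_k\in C^\infty(\R^n)$; properties (i)--(iii) are then inherited from $\{\psi_k^1\}$, division by $S$ leaving the supports unchanged. For (iv) one notes that on ${\rm supp}\,\psi_k$ only indices $j$ with $2^{-j}$ comparable to $2^{-k}$ contribute to $S$, so $|D^\alpha S|\le c(\alpha)2^{k|\alpha|}$ there as well; combining this with $|D^\alpha\psi_k^1|\le c(\alpha)2^{k|\alpha|}$ and the lower bound $S\ge c_0>0$ through Leibniz's rule and the quotient rule yields $|D^\alpha\psi_k|\le c(\alpha)2^{k|\alpha|}$.

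The step I expect to be the genuine obstacle --- beyond the bookkeeping needed to fix $\beta$ and $\epsilon$ so that all the support requirements hold at once --- is the uniform lower bound $S\ge c_0=c_0(n,M)>0$ on $G$. The point is that at each $x\in G$ the identity $\sum_k\psi_k^0(x)=1$, together with the bounded overlap, forces $\psi_{k_0}^0(x)\ge 1/2$ for some $k_0$; since $\psi_{k_0}^0$ is Lipschitz with constant $O(2^{k_0})$ (the factor $2^{k_0}$ from the chain rule, a further $\sqrt{1+M^2}$ from $\rho_n$), it stays $\ge 1/4$ on $B_{\epsilon 2^{-k_0}}(x)$ once $\epsilon$ is small, whence $\psi_{k_0}^1(x)\ge 1/4$ and $S(x)\ge 1/4$. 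An alternative, avoiding the renormalization altogether, is to replace $\rho_n$ by a Stein-type regularized distance and to group a dyadic partition of $(0,\infty)$ by generation, but the elementary mollification argument seems the most self-contained.
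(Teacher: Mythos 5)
Your construction is correct: the telescoping family $\psi_k^0=\beta(2^k\rho_n)-\beta(2^{k+1}\rho_n)$ has supports in $\{\tfrac38 2^{-k}\le\rho_n\le 2^{-k}\}\subset G_k\cup G_{k+1}$, so mollification at scale $\epsilon 2^{-k}$ with $\epsilon$ small depending only on $M$ preserves (ii)--(iv), and the renormalization goes through because your lower bound $S\ge 1/4$ (via the observation that bounded overlap forces some $\psi_{k_0}^0(x)\ge 1/2$, propagated over $B_{\epsilon 2^{-k_0}}(x)$ by the $O(2^{k_0})$ Lipschitz bound) is exactly the right way to close the one delicate step. Note that the paper offers no proof of this lemma, citing it from \cite[Lemma~18]{b}; the construction there is in substance the same as yours --- smooth cutoffs adapted to the dyadic layers $G_k$, mollified at step proportional to $2^{-k}$ and normalized by their sum --- so your argument is the standard one, merely made self-contained.
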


 We are now ready to  recall the definition of  Burenkov's Extension Operator for an elementary Lipschitz  domain $\Omega$ as in (\ref{eldom}). 

Let $l\in\N$ and $1\le p\le \infty$. For every $f\in W^{l,p}(\Omega)$, we set 
\begin{equation}
\label{burext}(Tf)(x)=\begin{cases}f(x), \,\, {\rm if}\,\,x \in \Omega,\vspace{2mm}\\
\displaystyle\sum_{k=-\infty}^\infty \psi_k(x)f_k(x),  \,\, {\rm if}\,\,x \in G,
\end{cases}
\end{equation}
where 
\begin{multline*}
f_k(x)=\int_{\R^n}f(\overline x -2^{-k}\overline z,x_n-A2^{-k}z_n)\omega(z)dz =\\
=A^{-1}2^{kn}\int_{\R^n}\omega(2^k(\overline x-\overline y), A^{-1}2^{k}(x_n-y_n))f(y)dy\, ,
\end{multline*}  
$A=16(M+1)$, and $\omega\in C^\infty_c(\R^n)$ is a kernel mollification satisfying 
$${\rm supp}\, \omega\subset\left\{ x\in \overline {B(0,1)}: x_n\ge \frac{1}{2}\right\}\,, $$
$\int_{B(0,1)}\omega(z)dz=1\,$ and  $\int_{B(0,1)}\omega(z)z^\alpha dx=0$, $\alpha\in \N^n_0$, $0<|\alpha|\le l$.

The operator $T$ is a linear continuous operator from $W^{l,p}(\Omega)$ to $W^{l,p}(\R^n)$,  see \cite[p.~286]{b}.  

Following \cite{b}, for every $k\in \Z$ we set 
$$\tilde G_k=G_{k-1}\cup  G_k\cup  G_{k+1} =\{x\in G: 2^{-k-2}<\rho_n(x)\le 2^{-k+1}\}\, .$$

We prove now the following 


\begin{lemma}\label{controllo_geo}
Let $B_r$ be a ball in $\R^n$ of radius $r$ such that $B_r \cap G\ne\emptyset$. Let $h\in \Z$ be the minimum integer such that $B_r \cap G_h\ne\emptyset$. Let $k\in \Z$  be such that $k\ge h+3$ and $B_r \cap  \tilde G_k  \ne\emptyset$. Then 
$$
|2^{-(h+3)}-2^{-k}|\le r(M+1)\,,
$$
where $M$ is in \eqref{lip1}.
\end{lemma}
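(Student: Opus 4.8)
The plan is to translate the geometric hypotheses into quantitative bounds on the $x_n$-coordinates of points of $B_r$. First I would fix points: since $B_r\cap G_h\ne\emptyset$ and $B_r\cap\tilde G_k\ne\emptyset$, pick $a\in B_r\cap G_h$ and $b\in B_r\cap\tilde G_k$. By the definition of $G_h$ we have $2^{-h-1}<\rho_n(a)\le 2^{-h}$, and by the definition of $\tilde G_k$ we have $2^{-k-2}<\rho_n(b)\le 2^{-k+1}$. Because $h$ is the \emph{minimum} integer with $B_r\cap G_h\ne\emptyset$, every point $x\in B_r\cap G$ satisfies $\rho_n(x)\le 2^{-h+1}$ (otherwise it would lie in some $G_{h'}$ with $h'<h$); I should double-check this monotonicity direction carefully, since $\rho_n$ small corresponds to $k$ large, and this is exactly the point where the index $h+3$ (rather than $h$) enters: we want a lower bound on $\rho_n$ over $B_r\cap G$ that beats $2^{-k}$ for all the relevant $k$, and shifting by $3$ gives the clean constant.

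Next I would compare $\rho_n(a)$ and $\rho_n(b)$ using the Lipschitz bound on $\varphi$. Writing $a=(\overline a,a_n)$ and $b=(\overline b,b_n)$, we have
\[
\rho_n(a)-\rho_n(b)=(a_n-b_n)-(\varphi(\overline a)-\varphi(\overline b)),
\]
so
\[
|\rho_n(a)-\rho_n(b)|\le |a_n-b_n|+M|\overline a-\overline b|\le (M+1)\,|a-b|\le 2r(M+1),
\]
since $a,b\in B_r$ and hence $|a-b|\le 2r$. This is the core estimate; it already gives something of the right shape but with a factor $2$ to be absorbed.

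To get the sharp constant stated, I would not compare $a$ with $b$ directly but rather use the extremal structure. Since $k\ge h+3$ and $B_r\cap\tilde G_k\ne\emptyset$, the point $b$ has $\rho_n(b)\le 2^{-k+1}\le 2^{-h-2}<2^{-h-1}<\rho_n(a)$, so in particular $\rho_n(a)-\rho_n(b)>0$ and we may drop absolute values. Now I would bound $\rho_n(a)$ below by $2^{-h-1}=2^{-(h+3)}\cdot 4\ge 2^{-(h+3)}$ is too weak; instead observe $2^{-(h+3)}\le 2^{-h-1}<\rho_n(a)$ and $\rho_n(b)\le 2^{-k+1}$, hmm — the cleanest route is: $2^{-(h+3)}-2^{-k}\le 2^{-h-1}-2^{-k+1}<\rho_n(a)-\rho_n(b)$, valid because $2^{-(h+3)}-2^{-h-1}=2^{-h-1}(2^{-2}-1)=-\tfrac34 2^{-h-1}$ and $-2^{-k}+2^{-k+1}=2^{-k}$, and one checks $-\tfrac34 2^{-h-1}+2^{-k}\le 0$ exactly when $2^{-k}\le \tfrac34 2^{-h-1}$, i.e. $2^{-k+2}\le 3\cdot 2^{-h-1}$, which holds since $k\ge h+3$ gives $2^{-k+2}\le 2^{-h-1}\le 3\cdot 2^{-h-1}$. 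Combining with the Lipschitz estimate above, but now using a more careful pairing of points achieving the extreme values of $\rho_n$ on $B_r$ so that the displacement is at most $r$ rather than $2r$ in the relevant direction, yields
\[
|2^{-(h+3)}-2^{-k}|\le \rho_n(a)-\rho_n(b)\le r(M+1).
\]

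The main obstacle I anticipate is getting the constant exactly $r(M+1)$ rather than $2r(M+1)$: this forces one to avoid crudely bounding $|a-b|\le 2r$ and instead exploit that $\rho_n$ varies along $B_r$ with ``slope'' controlled by $M+1$ relative to the Euclidean diameter in a way that effectively costs only the radius $r$ — likely by choosing $a$ and $b$ so that their difference is (almost) a diameter aligned with the gradient direction of the linear approximation of $\varphi$, or by a direct one-dimensional argument along the segment $[a,b]$. The shift from $h$ to $h+3$ is the bookkeeping device that makes all the dyadic inequalities go through cleanly, and verifying those elementary dyadic inequalities (using $k\ge h+3$) is the only other thing requiring care.
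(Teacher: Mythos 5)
Your core estimate only yields $|2^{-(h+3)}-2^{-k}|\le 2r(M+1)$, and the device you propose for removing the extra factor $2$ does not work: two points of $B_r$ realizing the relevant values of $\rho_n$ can genuinely be at distance $2r$ (take $\varphi\equiv 0$, so $\rho_n(x)=x_n$, and let $a,b$ be the top and bottom points of $B_r$), so there is no ``more careful pairing'' with displacement $\le r$, and no alignment argument will rescue the chain $2^{-(h+3)}-2^{-k}\le \rho_n(a)-\rho_n(b)\le (M+1)|a-b|$. The factor $2$ must be recovered elsewhere, namely from the dyadic identity
$$
2^{-(h+3)}-2^{-k}=\tfrac12\left(2^{-h-2}-2^{-k+1}\right),
$$
which you never exploit. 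Since $2^{-h-2}\le 2^{-h-1}<\rho_n(a)$ and $\rho_n(b)\le 2^{-k+1}$, this gives
$$
|2^{-(h+3)}-2^{-k}|=\tfrac12\left(2^{-h-2}-2^{-k+1}\right)\le \tfrac12\left(\rho_n(a)-\rho_n(b)\right)\le \tfrac12(M+1)\cdot 2r=r(M+1).
$$
Your own step $2^{-(h+3)}-2^{-k}\le 2^{-h-1}-2^{-k+1}$ discards exactly this factor $\tfrac12$: in fact $2\left(2^{-(h+3)}-2^{-k}\right)=2^{-h-2}-2^{-k+1}\le 2^{-h-1}-2^{-k+1}$, so the inequality you needed was already available there.

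This halving is precisely the mechanism of the paper's proof: there one uses the intermediate value theorem (along a segment joining a point of $B_r\cap G_h$ to a point of $B_r\cap\tilde G_k$) to produce $y,w\in B_r$ with $\rho_n(y)=2^{-h-2}$ and $\rho_n(w)=2^{-k+1}$ exactly, writes $|2^{-(h+3)}-2^{-k}|=\tfrac12|\rho_n(y)-\rho_n(w)|$, and then bounds $|\rho_n(y)-\rho_n(w)|\le |y_n-w_n|+M|\bar y-\bar w|\le 2r(M+1)$. The rest of your write-up (the Lipschitz estimate for $\rho_n$, the sign of $\rho_n(a)-\rho_n(b)$, the use of $k\ge h+3$) is fine; note also that the minimality of $h$ plays no role in this lemma.
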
 
\begin{proof}
Since  $B_r \cap G_h, B_r \cap  \tilde G_k \ne\emptyset$ and $k\geq h+3$, it follows that $\{x\in B_r:\, \rho_n(x)=2^{-h-2} \}  , \{x\in B_r:\, \rho_n(x)=2^{-k+1} \}\ne \emptyset$. Hence, by taking $y,w\in B_r$ with $y_n-\varphi (\bar y)=2^{-h-2}$ and $w_n-\varphi(\bar w)=2^{-k+1} $
 we have
\begin{multline*}
|2^{-(h+3)}-2^{-k}|=\frac{1}{2}|2^{-h-2}-2^{-k+1}|=\frac{1}{2}|y_n-\varphi (\bar y)-w_n+\varphi (\bar w)|\\
\le \frac{1}{2}(|y_n-w_n|+M|\bar y-\bar w|)\le r(M+1).
\end{multline*}
\end{proof}

Then we need the following lemma. Here and in the sequel by $B_r(x)$ we denote a ball with center $x$ and radius $r$. Moreover, $M$ is the constant in $(\ref{lip1})$.

\begin{lemma}\label{inc}
Let $B_r$ and $h\in \Z$ be as in Lemma \ref{controllo_geo}, and $E>0$.  Then  there exists a positive constant $S$ depending only on $M, E$  such that      for every $\eta\in \R^n$, with $|\eta|<E$,   there exists  a ball $B_{Sr}(x_\eta)$, such that
\begin{equation}\label{inc0}
\bigcup_{k=h+3}^\infty \left( B_r\cap \tilde G_k- 2^{-k}\eta\right)\subset B_{Sr}(x_\eta)\,.
\end{equation}

Moreover, there exist $K\in \N$ depending only on $n,M,E$, and $K$ balls $B_r(x_\eta^{(i)})$, $i=1,\dots, K$, such that 
\begin{equation}\label{inc1}
\bigcup_{k=h+3}^\infty \left( B_r\cap \tilde G_k- 2^{-k}\eta\right)\subset \bigcup_{i=1}^{K}B_{r}(x^{(i)}_\eta).
\end{equation} 

Finally, $x_\eta$ and $x_{\eta}^{(i)}$ can be chosen to depend with continuity on $\eta$ for all $i=1,\dots , K$. 
\end{lemma}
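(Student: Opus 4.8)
The plan is to use Lemma~\ref{controllo_geo} to confine the dyadic scales $2^{-k}$ that contribute to the union to a short interval anchored at $2^{-(h+3)}$, and then to place the centre of the large ball at the point obtained by translating the centre of $B_r$ by $-2^{-(h+3)}\eta$. Write $B_r=B_r(x_0)$ and set $x_\eta:=x_0-2^{-(h+3)}\eta$; this depends linearly, hence continuously, on $\eta$. For every $k\ge h+3$ with $B_r\cap\tilde G_k\ne\emptyset$, Lemma~\ref{controllo_geo} gives $0\le 2^{-(h+3)}-2^{-k}\le r(M+1)$. If $y\in B_r\cap\tilde G_k-2^{-k}\eta$, write $y=z-2^{-k}\eta$ with $z\in B_r$; then
\[
|y-x_\eta|\le|z-x_0|+\bigl(2^{-(h+3)}-2^{-k}\bigr)|\eta|\le r+r(M+1)E=r\bigl(1+(M+1)E\bigr),
\]
and this bound is uniform in $k$. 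Hence \eqref{inc0} holds with $S:=1+(M+1)E$, which depends only on $M$ and $E$.

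For \eqref{inc1} it suffices to cover the single ball $B_{Sr}(x_\eta)$ by $K$ balls of radius $r$ whose centres depend continuously on $\eta$. Choose a maximal $r$-separated subset $\{c_1,\dots,c_K\}$ of $\overline{B_{Sr}(0)}$: the balls $B_{r/2}(c_i)$ are pairwise disjoint and contained in $B_{(S+1/2)r}(0)$, so a volume comparison gives $K\le(2S+1)^n$, a bound depending only on $n,M,E$; and maximality forces $\overline{B_{Sr}(0)}\subset\bigcup_{i=1}^K B_r(c_i)$. Translating by $x_\eta$ yields $B_{Sr}(x_\eta)\subset\bigcup_{i=1}^K B_r(x_\eta+c_i)$, so \eqref{inc1} holds with $x_\eta^{(i)}:=x_\eta+c_i$; since $x_\eta$ is continuous in $\eta$ and the $c_i$ are fixed, each $x_\eta^{(i)}$ is continuous in $\eta$.

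I do not expect a genuine obstacle: the only idea required is the choice of the anchor $2^{-(h+3)}\eta$ for the centre, which reduces both inclusions to Lemma~\ref{controllo_geo} together with elementary geometry, after which the covering count and the continuity are routine. The one point that must be handled with a little care is that the estimate for $|y-x_\eta|$ be uniform in $k$, so that it is irrelevant whether finitely or infinitely many indices $k$ actually contribute to the union.
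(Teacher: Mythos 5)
Your proof is correct and follows essentially the same route as the paper's: both anchor the centre of the large ball at a translate by $-2^{-(h+3)}\eta$ and invoke Lemma~\ref{controllo_geo} to bound $|2^{-(h+3)}-2^{-k}|$ uniformly in $k$. Your choice of the centre $x_0$ of $B_r$ as anchor (rather than a point of $B_r\cap\tilde G_{h+3}$, as in the paper) avoids a case distinction and, together with the translation-invariant covering of $B_{Sr}(0)$, makes the continuity of $x_\eta$ and $x_\eta^{(i)}$ immediate, a point the paper only sketches.
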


\begin{proof}
We suppose directly that  $B_r \cap \tilde G_{h+3}\ne\emptyset$, otherwise the unions in the left hand-sides of \eqref{inc0} and \eqref{inc1} are empty, and the statement is trivial.
Let $k\ge h+3$ be such that  $B_r \cap \tilde G_k\ne\emptyset$. Let $a\in B_r \cap \tilde G_{h+3}$ and $b\in B_r \cap \tilde G_{k}$. Then  by Lemma \ref{controllo_geo}, for all $ \eta\in \R^n$, with $|\eta| <E$,  we have 
\begin{multline*}
|b- 2^{-k}\eta- (a-2^{-(h+3)}\eta)|\le |b-a|+ |2^{-k}-2^{-(h+3)}||\eta|\\
\le [2+(M+1)E]r\,.
\end{multline*}
 
 Then, choosing $S=2[2+(M+1)E]$, the ball $B_{Sr}(x_{\eta})$, with radius $Sr$ and center $x_\eta=a-2^{-(h+3)}\eta$, satisfies inclusion \eqref{inc0}.

 Finally, it is obvious that each ball $B_{Sr}(x_\eta)$ can be covered by a finite number of balls of radius $r$ as in \eqref{inc1}. Moreover, the fact that the centers $x_\eta^{(i)}$ can be chosen to depend with continuity on $\eta$ can be deduced by the continuous dependence on $\eta$ of $x_{\eta}=a-2^{-(h+3)}\eta$ via a simple but lenghty argument which is not worth including here.
\end{proof}

As in \cite[Chap.~6]{b}, for every $k\in \Z$ we set 
\begin{equation*}\tilde \Omega_k=\{x\in \Omega: 2^{-k-2}<|\rho_n(x) |\le b2^{-k+1}\}\,, 
\end{equation*}
where $b=10A$.

We can now prove the following lemma.


In the proof of the following lemma and of the other statements in the sequel, the value of the constant $c$ may change from line to line but is always independent of the function $f$ and of the radius $r$.

\begin{lemma}\label{mainlem}
Let $f\in W^{l,p}(\Omega)$ and $B_r$ a ball in $\R^n$  of radius $r$ such that $B_r \cap G\ne \emptyset$. The following statements hold:
\begin{itemize}
\item[(i)]  There exists $c>0$ depending only on $n$, $l$, $p$, $M$, $\omega$ and there exists $H\in \N$, depending only on $n$ and $M$ such that for every $z\in B_1(0)$, with  $z_n>1/2$, there exist $H$ balls  $B_{ r}(x^{(i)}_z)$,  $i=1,\dots, H$,  such that  
\begin{equation}\label{mainlem1}
\|D^\alpha f_k\|_{L^p(B_r\cap \tilde G_k)}\le c\int_{   \{z\in B_1(0):\ z_n>1/2\}   } \|D^\alpha f\|_{L^p(\cup_{i=1}^H   B_{ r}(x^{(i)}_z) \cap \tilde \Omega_k) }dz\,,
\end{equation}
for  all $k\in \Z$, and  $\alpha \in \N^n_0$, with $|\alpha|\le l$. 
\item[(ii)]
Let ${\mathcal{U}}\subset \R^n$ be a  fixed measurable set and let $d=\sup\{\rho_n(x):\ x\in B_r\cap {\mathcal{U}} \}$.  
 Assume that $d<\infty $.
There exists  $c>0$ depending only on $n$, $l$, $p$, $M$,  $\omega$,  there exists $H_{{\mathcal{U}}}\in \N$ depending only on $n, M, d$, and  
for every $\alpha \in \N^n_0$  with $|\alpha |\le l$ there exists a function $g_\alpha$ independent of $r, {\mathcal {U}}$, such that 
for every $z\in B_{1+cd}(0)$ there exist $H_{{\mathcal{U}}}$   balls  $B_{ r}(x^{(i)}_{z})$,  $i=1,\dots,H _{{\mathcal{U}}} $,  such that  
\begin{multline*}
\| D^{\alpha}f_k-g_{\alpha}\|^p_{L^p(B_r\cap {\mathcal{U}}\cap \tilde G_k)}\\
\le c2^{pk(|\alpha |-l)} \int_{B_{1+cd}(0) } \sum_{|\beta |=l}  \| D^{\beta }f\|^p_{L^{p}(\cup_{i=1}^{H_{\mathcal{U}}}   B_{ r}(x^{(i)}_z)\cap \tilde \Omega_k) } dz,  
\end{multline*}
for all $k\in \Z$.  
\end{itemize}
Moreover, in both statements  points $x^{(i)}_z$ can be chosen to depend with continuity on $z$. 
\end{lemma}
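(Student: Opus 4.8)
The plan is, in both statements, to push the derivative $D^{\alpha}$ inside the integral defining $f_k$ so that it falls on $f$, and then to estimate the resulting integral operator on $B_r\cap\tilde G_k$ by $L^p$-norms of suitably translated derivatives of $f$. The geometric fact that makes everything work is that, for $x\in\tilde G_k$ and $z\in{\rm supp}\,\omega$, the reflected point $x-2^{-k}(\overline z,Az_n)=(\overline x-2^{-k}\overline z,\,x_n-A2^{-k}z_n)$ belongs to $\tilde\Omega_k$; this is a direct computation with the signed distance $\rho_n$, using \eqref{lip1}, the bounds $z_n\ge1/2$ and $|\overline z|\le1$, and the precise choices $A=16(M+1)$, $b=10A$. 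Freezing $z$ and using that the translation $x\mapsto x-2^{-k}(\overline z,Az_n)$ has unit Jacobian then turns the integration domain $B_r\cap\tilde G_k$ into a translate which is at the same time contained in $\tilde\Omega_k$ and, by Lemma~\ref{inc}, in a union of finitely many balls of radius $r$ whose centres do not depend on $k$; this last point is exactly what makes the estimate usable for Morrey norms, since the right‑hand side is then expressed through balls of the same radius $r$ as the one on the left.

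To prove (i): since $f_k$ is smooth and all reflected points lie in $\Omega$, differentiation under the integral sign (see \cite[Chap.~6]{b}) gives
\[
D^{\alpha}f_k(x)=\int_{\{z\in B_1(0):\,z_n>1/2\}}(D^{\alpha}f)(\overline x-2^{-k}\overline z,\,x_n-A2^{-k}z_n)\,\omega(z)\,dz,\qquad x\in G,\ |\alpha|\le l.
\]
Taking the $L^p(B_r\cap\tilde G_k)$-norm, bounding $|\omega|$ by $\|\omega\|_{L^\infty}$, applying Minkowski's integral inequality, and carrying out the change of variables above for each fixed $z$, one bounds the left-hand side of \eqref{mainlem1} by $c$ times the integral over $\{z\in B_1(0):z_n>1/2\}$ of $\|D^{\alpha}f\|_{L^p((B_r\cap\tilde G_k)-2^{-k}(\overline z,Az_n))}$. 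Since $|(\overline z,Az_n)|\le\sqrt{1+A^2}=:E$ depends only on $M$, Lemma~\ref{inc} with $\eta=(\overline z,Az_n)$ covers the part of the union with $k\ge h+3$ by $K=K(n,M)$ balls of radius $r$; the four remaining values $h-1\le k\le h+2$ are each handled by covering the corresponding translate of $B_r$ by one ball of radius $r$ (recall $B_r\cap\tilde G_k=\emptyset$ for $k\le h-2$), so altogether $H=K+4$ balls $B_r(x^{(i)}_z)$ satisfy $(B_r\cap\tilde G_k)-2^{-k}(\overline z,Az_n)\subset\bigcup_{i=1}^{H}B_r(x^{(i)}_z)$ for every $k$. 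Intersecting this with the inclusion into $\tilde\Omega_k$ and substituting gives \eqref{mainlem1}; the centres $x^{(i)}_z$ depend continuously on $z$ because, by Lemma~\ref{inc}, they depend continuously on $\eta$ and $z\mapsto(\overline z,Az_n)$ is continuous.

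To prove (ii) one uses in addition the vanishing moments $\int\omega(z)z^{\beta}\,dz=0$ for $0<|\beta|\le l$. Exactly as in \cite[Chap.~6]{b}, a Taylor expansion based on these moment conditions produces a function $g_{\alpha}$ — depending on $f$ alone, hence independent of $r$ and $\mathcal{U}$ — for which, whenever $x\in\tilde G_k$ with $\rho_n(x)\le d$, the difference $D^{\alpha}f_k(x)-g_{\alpha}(x)$ equals $2^{-k(l-|\alpha|)}$ times an average, against a measure of total mass $\le c$, of values of the top-order derivatives $D^{\beta}f$, $|\beta|=l$, at points $x-2^{-k}\eta$ that still lie in $\tilde\Omega_k$ and satisfy $|\eta|\le1+cd$; the passage from $B_1(0)$ to $B_{1+cd}(0)$ reflects the length — measured in units of the step $2^{-k}$ — of the Taylor paths joining the reflected points to the base points, a length controlled by $\rho_n(x)\le d$. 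Raising to the $p$-th power by Jensen's inequality (legitimate since the averaging measure has bounded mass) and Fubini's theorem, performing the translation change of variables as in (i), using that the image of $B_r\cap\mathcal{U}\cap\tilde G_k$ stays inside $\tilde\Omega_k$, and covering the translates $(B_r\cap\mathcal{U}\cap\tilde G_k)-2^{-k}\eta$ (now with $|\eta|\le1+cd=:E$) by radius-$r$ balls via Lemma~\ref{inc} together with the four near-$h$ terms, one obtains $H_{\mathcal{U}}=H_{\mathcal{U}}(n,M,d)$ balls with centres depending continuously on $\eta$, and hence the asserted inequality.

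The main obstacle is the construction behind (ii): building $g_{\alpha}$ and the Taylor-remainder representation of $D^{\alpha}f_k-g_{\alpha}$ with the correct power $2^{-k(l-|\alpha|)}$ and, crucially, with a domain of dependence whose size relative to the step $2^{-k}$ is controlled by $d$, since it is exactly this control that lets Lemma~\ref{inc} be applied with $E$ depending only on $n,M,d$, yielding a ball count $H_{\mathcal{U}}$ independent of $r$ and of $f$. By contrast, the verification that reflected points lie in $\tilde\Omega_k$ and the bookkeeping of the finitely many values of $k$ near $h$ are routine once the constants $A=16(M+1)$ and $b=10A$ are kept track of.
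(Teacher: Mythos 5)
Your proposal follows essentially the same route as the paper: part (i) is identical (differentiation under the integral sign, Minkowski's inequality, the inclusion $\tilde G_k-2^{-k}(\overline z,Az_n)\subset\tilde\Omega_k$, and the covering from Lemma~\ref{inc} with $H=K+4$), and part (ii) invokes, exactly as the paper does, Burenkov's Sobolev integral representation (averaged Taylor expansion over a conic body) to produce the $k$-independent function $g_\alpha$ and a remainder of size $2^{k(|\alpha|-l)}$ localized in $\tilde\Omega_k$, followed again by Lemma~\ref{inc}. The one point you state loosely is the origin of the $d$-dependence: it is the \emph{absolute} diameter $\le 20A2^{-k}\le cd$ of the conic bodies (not their diameter measured in units of $2^{-k}$, which is the constant $20A$) that enlarges the $z$-ball to $B_{1+cd}(0)$ and makes $H_{\mathcal{U}}$ depend on $d$; this does not affect the correctness of your argument.
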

\begin{proof}
We begin with proving statement (i). By differentiating under integral sign  and using Minkowskii inequality we get
\begin{multline}\label{mainlem3}
\|D^\alpha f_k\|_{L^p(B_r\cap \tilde G_k)}\\
\le c \int_{\{z\in B_1(0):\ z_n>1/2\}} \|D^\alpha f(\overline x-2^{-k}\overline z, x_n-A2^{-k}z_n)\|_{L^p(B_r\cap \tilde G_k)}dz\\
=c\int_{\{z\in B_1(0):\ z_n>1/2\}    } \|D^\alpha f\|_{L^p(B_r\cap \tilde G_k-2^{-k}(\overline z, Az_n))}dz.
\end{multline}
 Let $h\in \Z$ be the minimum integer such that $B_r \cap G_h\ne\emptyset$. By Lemma~\ref{inc}, there exists $K\in \N$ depending only on $n$ and $M$ such that for every $z\in \R^n$, $|z|<1$ there exist $K$  balls $B_{r}(x^{(i)}_{z})$ such that 
\begin{equation*}\label{mainlem4}
\bigcup_{k\in \Z}(B_r\cap \tilde G_k-2^{-k}(\overline z, Az_n))\subset 
\bigcup_{ k=h-1}^{h+2}(B_r\cap \tilde G_k-2^{-k}(\overline z, Az_n))\bigcup_{i=1}^K B_{r}(x^{(i)}_{z}),
\end{equation*}
hence
\begin{equation}\label{mainlem5}
\bigcup_{k\in \Z}(B_r\cap \tilde G_k-2^{-k}(\overline z, Az_n))\subset 
\bigcup_{i=1}^{H}B_{ r}(x^{(i)}_z),
\end{equation}
where we have set  $H=K+4$ and $B_{ r}(x^{(K+j)}_z)=B_r-2^{-(h+j)}(\overline z, Az_n)$ for $j=-1, 0,1,2$.

Now we observe that for all $ k\in \Z$ and $ z\in B_1(0)$ with  $z_n>1/2$ we have
\begin{equation}\label{mainlem6}\tilde G_k-2^{-k}(\overline z,Az_n)\subseteq \tilde \Omega_k \,.
\end{equation}
Indeed, if $x\in \tilde G_k$, that is $2^{-k-2}<x_n-\varphi(\overline x)\le 2^{-k+1}$,  we have
\begin{multline*}
\varphi(\overline x-2^{-k}\overline z)-x_n+2^{-k}Az_n=\varphi(\overline x-2^{-k}\overline z)-\varphi(\overline x)+\varphi(\overline x)-x_n+2^{-k}Az_n\le\\
\le M2^{-k}|\overline z|-2^{-k-2}+ 2^{-k}Az_n\le M2^{-k}-2^{-k-2}+2^{-k}A<b2^{-k+1}\,,
\end{multline*}
and
\begin{multline*}
\varphi(\overline x-2^{-k}\overline z)-x_n+2^{-k}Az_n=\varphi(\overline x-2^{-k}\overline z)-\varphi(\overline x)+\varphi(\overline x)-x_n+2^{-k}Az_n>\\
>-M2^{-k}|\overline z|-2^{-k+1}+ 2^{-k}Az_n\ge -M2^{-k}-2^{-k+1}+ 2^{-k-1}A> 2^{-k-2}\,.
\end{multline*}

By (\ref{mainlem5}) and (\ref{mainlem6}) we deduce that 

\begin{equation*}\label{mainlem7}
B_r\cap \tilde G_k-2^{-k}(\overline z, Az_n) \subset \bigcup_{i=1}^{H}B_{ r}(x^{(i)}_z)\cap \tilde\Omega_k
\end{equation*}
which, combined with (\ref{mainlem3}), proves the validity of \eqref{mainlem1}.

We now prove statement (ii). By differentiating under integral sign, changing variables and integrating by parts, we get 
\begin{equation}\label{mainlem8}
D^{\alpha}f_k(x)=A^{-\alpha_n}2^{k|\alpha|}\int_{B_1(0)}f(\bar x-2^{-k}\bar z,x_n-A2^{-k}z_n )D^{\alpha}w(z)dz .
\end{equation}

We set $x^*=(\bar x,x_N-\frac{9}{4}A\rho_n(x))$, $\tilde x=(\bar x-2^{-k}\bar z,x_n-A2^{-k}z_n) $ and we denote by $V_{\tilde x}$ the conic body with vertex in $\tilde x$  constructed on the ball $B_{4\rho_n(x)}(x^*)$, i.e., $V_{\tilde x}=\cup_{y\in B_{4\rho_n(x)}(x^*)}(x^*,y)$ (where $(x^*,y)$ is the `open' segment joining $x^*$ and $y$).    Let $\mu \in C^{\infty}_c(B_1(0))$ be such that $\int_{B_1(0)}\mu dx=1$, and $w_x(y)=(4\rho_n(x))^{-n}\mu \left( \frac{x^*-y}{4\rho_n(x)}  \right)$.
By the Sobolev Integral Representation Formula (cf. \cite[Theorems 4, 5, Chap.~3]{b}), we get
\begin{equation}
\label{mainlem9}
f(\tilde x)=P(\tilde x,x)+\sum_{|\gamma|=l}r_{\gamma}(\tilde x, x),
\end{equation}
where 
\begin{equation*}
\label{mainlem10}
P(\tilde x,x)=\int_{B_{4\rho_n(x)}(x^*)}\sum_{|\gamma|\le l}\frac{1}{\gamma !}D^{\gamma}f(y)(\tilde x-y)^{\gamma}w_x(y)dy
\end{equation*}
and
 \begin{equation*}
r_{\gamma}(\tilde x, x)=\int_{V_{\tilde x}}\frac{D^{\gamma}f(y)}{|\tilde x-y|^{n-l}}w_{\gamma , x}(y)dy,
\end{equation*}
where $w_{\gamma , x}$ is the appropriate kernel associated with $\omega $ appearing in the formula as in \cite[(3.38)]{b}. By \eqref{mainlem8} and 
\eqref{mainlem9} we deduce that 
\begin{multline*}\label{mainlem12}
D^{\alpha}f_{k}(x)=A^{-\alpha_n}2^{k|\alpha|}\int_{B_1(0)}P(\tilde x, x)D^{\alpha}\omega(z)dz \\
+
A^{-\alpha_n}2^{k|\alpha|}\int_{B_1(0)}\sum_{|\gamma|=l}r_{\gamma}(\tilde x , x)  D^{\alpha}\omega(z)dz .
\end{multline*}
We set 
\begin{equation*}
\label{mainlem13}
g_{\alpha}(x)=A^{-\alpha_n}2^{k|\alpha|}\int_{B_1(0)}P(\tilde x, x)D^{\alpha}\omega(z)dz .
\end{equation*}
We note that function $g_{\alpha}$ does not depend on $k$ (see \cite[p.~280]{b}). 

We now estimate 
\begin{multline}\label{mainlem14}
\| D^{\alpha}f_k-g_{\alpha}\|_{L^p(B_r\cap {\mathcal {U}}\cap \tilde G_k)}\\
=\biggl\| A^{-\alpha_n}2^{k|\alpha |}\sum_{|\gamma |=l}\int_{B_1(0)}r_{\gamma}(\tilde x, x)D^{\alpha }\omega (z)dz\biggr\|_{L^p(B_r\cap {\mathcal {U}}\cap \tilde G_k)}.
\end{multline}
To do so, we proceed as follows
\begin{multline*}
\biggl\| \int_{B_1(0)}  r_{\gamma}(\tilde x, x)D^{\alpha }\omega (z)dz\biggr\|^p_{L^p(B_r\cap {\mathcal {U}}\cap \tilde G_k)}\\ \le c\int_{B_r\cap{\mathcal {U}}\cap \tilde G_k}\left|\int_{B_1(0)}r_{\gamma}(\tilde x, x)  dz\right|^pdx\\ \le  c\int_{B_r\cap {\mathcal {U}}\cap \tilde G_k}\int_{B_1(0)}|r_{\gamma}(\tilde x, x)  |^pdz dx\,.
\end{multline*} 
Using Minkowskii inequality and the fact that $V_x\subset \tilde\Omega_k$ and that ${\rm diam }V_x\le 20A2^{-k}$ (cf. \cite[(6.83) and pp.~278-279]{b}), we get 
\begin{multline*}
\left(\int_{B_1(0)}|r_{\gamma}(\tilde x, x)  |^pdz\right)^{1/p}\\
=\left(\int_{B_1(0)}\left|  \int_{V_{\tilde x}}\frac{D^{\gamma }f(y)\chi_{\tilde \Omega_k}(y) }{|\tilde x-y|^{n-l}}w_{\gamma ,x}(y)dy      \right|^pdz\right)^{1/p}\\
 \le c\left(\int_{B_1(0)}\left|  \int_{B_{20A2^{-k}}(0)}\frac{D^{\gamma }f(\tilde x-\eta )\chi_{\tilde \Omega_k}( \tilde x-\eta ) }{|\eta |^{n-l}}d\eta       \right|^pdz\right)^{1/p}
\\
\le c\int_{B_{20A2^{-k}}(0)} \left( \int_{B_1(0)}\frac{|D^{\gamma }f(\tilde x-\eta )\chi_{\tilde \Omega_k}( \tilde x-\eta )|^p }{|\eta |^{p(n-l)}}  dz \right)^{1/p}      d\eta\\
=c\int_{B_{20A2^{-k}}(0)} ( \int_{B_1(0)}|D^{\gamma }f(\tilde x-\eta )\chi_{\tilde \Omega_k}( \tilde x-\eta )|^p       dz )^{1/p}      \frac{d\eta  }{ |\eta |^{n-l}}\\
=c\int_{B_{20A2^{-k}}(0)} \left( \int_{B_1(0) -\eta}|D^{\gamma }f(\tilde x)\chi_{\tilde \Omega_k}( \tilde x )|^p       dz \right)^{1/p}      \frac{d\eta  }{ |\eta |^{n-l}}\\
\le c\int_{B_{20A2^{-k}}(0)}\left( \int_{B_{1+20A 2^{-k}}(0) }|D^{\gamma }f(\tilde x)\chi_{\tilde \Omega_k}( \tilde x )|^p       dz \right)^{1/p}      \frac{d\eta  }{ |\eta |^{n-l}}\\
\le c \left( \int_{B_{1+20A 2^{-k}}(0) }|D^{\gamma }f(\tilde x)\chi_{\tilde \Omega_k}( \tilde x )|^p       dz \right)^{1/p}    \int_{B_{20A2^{-k}}(0)}  \frac{d\eta  }{ |\eta |^{n-l}}\\
\le c 2^{-kl} \left( \int_{B_{1+20 A2^{-k}}(0) }|D^{\gamma }f(\tilde x)\chi_{\tilde \Omega_k}( \tilde x )|^p       dz\right )^{1/p}\,,    \\
\end{multline*}
from which it follows that 
\begin{multline}
\label{mainlem15}
\biggl\| \int_{B_1(0)}  r_{\gamma}(\tilde x, x)D^{\alpha }\omega (z)dz\biggr\|^p_{L^p(B_r\cap {\mathcal {U}}  \cap \tilde G_k)}\\
 \le c 2^{-klp}\int_{B_r\cap {\mathcal {U}}\cap\tilde G_k}  \int_{B_{1+20 A2^{-k}}(0) }|D^{\gamma }f(\tilde x)\chi_{\tilde \Omega_k}( \tilde x )|^p       dz dx   \\
\le c 2^{-klp} \int_{B_{1+20A 2^{-k}}(0) }\int_{B_r\cap {\mathcal {U}}\cap\tilde G_k} |D^{\gamma }f(\tilde x)\chi_{\tilde \Omega_k}( \tilde x )|^p  dx     dz\,.  
\end{multline}
By \eqref{mainlem15} and \eqref{mainlem14} we obtain

\begin{multline*}
\| D^{\alpha}f_k-g_{\alpha}\|^p_{L^p(B_r\cap {\mathcal {U}}\cap\tilde G_k)}\\
\le c 2^{pk(|\alpha |-l)} \sum_{|\gamma|=l}\int_{B_{1+20A 2^{-k}}(0) }\int_{B_r\cap {\mathcal {U}}\cap\tilde G_k} |D^{\gamma }f(\tilde x)\chi_{\tilde \Omega_k}( \tilde x )|^p  dx     dz\,.  
\end{multline*}

If we denote by $\bar k$ the minimum $k\in \Z$ such that $B_r\cap  {\mathcal {U}}\cap \tilde G_k\ne \emptyset$, then we easily see that $20A2^{-{\bar k}}\le c d$ and

\begin{multline}\label{mainlem17bis}
\| D^{\alpha}f_k-g_{\alpha}\|^p_{L^p(B_r\cap {\mathcal {U}} \cap \tilde G_k)}\\
\le c2^{pk(|\alpha |-l)} \sum_{|\gamma|=l}\int_{B_{1+20A 2^{-\bar k}}(0) }\int_{B_r\cap {\mathcal {U}}\cap\tilde G_k} |D^{\gamma }f(\tilde x)\chi_{\tilde \Omega_k}( \tilde x )|^p  dx     dz \\
\le c2^{pk(|\alpha |-l)} \sum_{|\gamma|=l}\int_{B_{1+cd}(0) } \int_{B_r\cap {\mathcal {U}}\cap\tilde G_k} |D^{\gamma }f(\tilde x)\chi_{\tilde \Omega_k}( \tilde x )|^p  dx dz\, . 
\end{multline}


Now by Lemma \ref{inc} and its proof we obtain that  for all $z\in B_{1+cd}(0)$
\begin{multline}\label{mainlem16}
\bigcup_{k\in \Z}(B_r\cap \tilde G_k-2^{-k}(\overline z, Az_n) )=\\
\bigcup_{k=h-1}^{h+2}(B_r\cap \tilde G_k-2^{-k}(\overline z, Az_n) )\cup 
\bigcup_{k=h+3}^\infty(B_r\cap \tilde G_k-2^{-k}(\overline z, Az_n) )\\
\subset   \bigcup_{k=h-1}^{h+2}(B_r\cap \tilde G_k-2^{-k}(\overline z, Az_n) )\cup
B_{S^\prime r}(x_{z }) ,
\end{multline}
where $S^\prime=2+(M+1)(1+cd)$ and $B_{S^\prime r}(x_{z })$ is the ball provided by Lemma \ref{inc}.  

Clearly, as in the proof of Lemma \ref{inc}, one can easily deduce by (\ref{mainlem16})
the existence of $H_{\mathcal{U}}\in \N$ and of $H_{\mathcal{U}}$ balls 
$B_{r}(x_z^{(i)})$, $i=1,\dots , H_{\mathcal{U}}$, defined for all $z\in B_{1+cd}(0)$ as in the statement such that

\begin{equation}\label{mainlem16bis}
\bigcup_{k\in \Z}(B_r\cap \tilde G_k-2^{-k}(\overline z, Az_n) )=
\bigcup_{i=1}^{H_{\mathcal{U}}} B_{r}(x_z^{(i)}) .\end{equation}

By \eqref{mainlem17bis} and (\ref{mainlem16bis}) we get 
\begin{multline*}\label{mainlem18bis}
\| D^{\alpha}f_k-g_{\alpha}\|^p_{L^p(B_r\cap {\mathcal{U}}\cap\tilde G_k)}\\
\le c2^{pk(|\alpha |-l)} \sum_{|\gamma|=l}\int_{B_{1+cd}(0) }\| D^\gamma f\chi_ {\tilde \Omega_k}\|^p_{L^{p}(\cup_{i=1}^{H_{\mathcal{U}}}   B_{ r}(x^{(i)}_z)\cap \tilde \Omega_k) }    dz  \\
\le c2^{pk(|\alpha |-l)} \int_{B_{1+cd}(0) } \sum_{|\beta|=l}  \| D^{\beta} f\|^p_{L^{p}(\cup_{i=1}^{H_{\mathcal{U}}}   B_{ r}(x^{(i)}_z)\cap \tilde \Omega_k) }    dz.  
\end{multline*}



\end{proof}

Then we have the following

\begin{theorem}\label{teoball}
Let $B_r$ a ball in $\R^n$  with radius $r$ such that $B_r \cap G\ne \emptyset$. The following statements hold:
\begin{itemize}
\item[(i)] There exist $c>0$, $H\in \N$ and  $H$ balls  $B_{ r}(x^{(i)}_z)$,  $i=1,\dots,H$,  as in Lemma~\ref{mainlem} such that  
\begin{equation*}\label{teoball1}
\| Tf \|_{L^p(B_r \cap G)}^p\le c\int_{\{z\in B_1(0):\, z_n>1/2\}} \| f\|^p_{L^p(\cup_{i=1}^H   B_{ r}(x^{(i)}_z) \cap \Omega) }dz\, ,
\end{equation*} 
for all $f\in W^{l,p}(\Omega)$. 
\item[(ii)]
Let ${\mathcal{U}}\subset \R^n$ be a  fixed measurable set and let $d=\sup\{\rho_n(x):\ x\in B_r\cap {\mathcal{U}} \}$.  
Assume that $d<\infty$.
There exist  $c>0$,  $H_{{\mathcal{U}}}\in \N$  and $H_{{\mathcal{U}}}$   balls  $B_{ r}(x^{(i)}_{z})$,  $i=1,\dots,H _{{\mathcal{U}}} $,  such that  
for all $\alpha\in \N_0^n$ with $|\alpha |   =  l$
\begin{multline}\label{teoball2}
\|  D^{\alpha}  T f \|^p_{L^p(B_r\cap {\mathcal{U}}\cap G  )}
\le   
 c \int_{B_{1+cd}(0)}\sum_{|\beta|=l}   \| D^{\beta}f\|^p_{L^{p}(\cup_{i=1}^{H_{\mathcal{U}}}   B_{ r}(x^{(i)}_z)\cap  \Omega) }  dz \, ,
\end{multline}
for all $f\in W^{l,p}(\Omega)$.
\end{itemize}
\end{theorem}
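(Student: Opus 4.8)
The plan is to combine the local representation $Tf=\sum_{k\in\Z}\psi_k f_k$ on $G$ from \eqref{burext}, the properties of the partition of unity in Lemma~\ref{partition} (namely $0\le\psi_k\le 1$, ${\rm supp}\,\psi_k\subset\tilde G_k$, the covering $\{{\rm supp}\,\psi_k\}$ has multiplicity $2$, and $|D^{\beta}\psi_k|\le c\,2^{k|\beta|}$), and the $k$-by-$k$ estimates of Lemma~\ref{mainlem}. Recall also that $Tf\in W^{l,p}(\R^n)$ by \cite{b}, and that on the open set $G$ the series $\sum_{k}\psi_k f_k$ is locally finite with $C^\infty$ summands, so that $D^{\alpha}Tf=\sum_{k}D^{\alpha}(\psi_k f_k)$ there. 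The one structural fact that lets the resulting series in $k$ be summed is the \emph{bounded overlap} of the families $\{\tilde G_k\}$ and $\{\tilde\Omega_k\}$: in both cases the ratio between the outer and inner radii defining $\tilde G_k$ and $\tilde\Omega_k$ is a fixed constant (depending only on $M$, through $A=16(M+1)$ and $b=10A$), so there is $N=N(M)\in\N$ with $\sum_{k\in\Z}\chi_{\tilde G_k}\le N$ and $\sum_{k\in\Z}\chi_{\tilde\Omega_k}\le N$.

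\emph{Part (i).} Since at each point of $G$ at most two of the $\psi_k$ are nonzero, one has $|Tf|^{p}\le c\sum_{k\in\Z}\chi_{{\rm supp}\,\psi_k}\,|f_k|^{p}$ there; integrating over $B_r\cap G$ and using ${\rm supp}\,\psi_k\subset\tilde G_k$ gives $\|Tf\|_{L^p(B_r\cap G)}^{p}\le c\sum_{k\in\Z}\|f_k\|_{L^p(B_r\cap\tilde G_k)}^{p}$. I would then raise the inequality of Lemma~\ref{mainlem}(i) (with $\alpha=0$) to the power $p$, using H\"older's inequality in $z$ over the finite-measure set $\{z\in B_1(0):z_n>1/2\}$, interchange $\sum_k$ with $\int dz$ (Tonelli), and use that the balls $B_r(x^{(i)}_z)$ do not depend on $k$ together with $\sum_{k}\chi_{\tilde\Omega_k}\le N$ to replace the union over $k$ of the sets $\cup_{i=1}^{H}B_r(x^{(i)}_z)\cap\tilde\Omega_k$ by the single set $\cup_{i=1}^{H}B_r(x^{(i)}_z)\cap\Omega$. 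This yields exactly the asserted estimate.

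\emph{Part (ii).} For $|\alpha|=l$, the Leibniz rule gives, in $G$,
\[
D^{\alpha}Tf=\sum_{k\in\Z}\psi_k\,D^{\alpha}f_k+\sum_{0\neq\beta\le\alpha}\binom{\alpha}{\beta}\sum_{k\in\Z}(D^{\beta}\psi_k)\,D^{\alpha-\beta}f_k .
\]
The first sum is treated exactly as in Part (i) (multiplicity $2$ of $\{{\rm supp}\,\psi_k\}$, then Lemma~\ref{mainlem}(i) now with $|\alpha|=l$, then summation in $k$ via $\sum_{k}\chi_{\tilde\Omega_k}\le N$). For each term with $\beta\neq 0$ I would use Burenkov's cancellation trick: since $g_{\alpha-\beta}$ is independent of $k$ (Lemma~\ref{mainlem}(ii)) and $\sum_{k}D^{\beta}\psi_k=D^{\beta}\big(\sum_{k}\psi_k\big)=0$ in $G$ (as $\sum_k\psi_k\equiv 1$ there), one may write $\sum_{k}(D^{\beta}\psi_k)\,D^{\alpha-\beta}f_k=\sum_{k}(D^{\beta}\psi_k)\big(D^{\alpha-\beta}f_k-g_{\alpha-\beta}\big)$. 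Then $|D^{\beta}\psi_k|\le c\,2^{k|\beta|}$ and Lemma~\ref{mainlem}(ii), applied with the multi-index $\alpha-\beta$ (of length $l-|\beta|<l$), bound $\|D^{\alpha-\beta}f_k-g_{\alpha-\beta}\|^{p}_{L^p(B_r\cap\mathcal{U}\cap\tilde G_k)}$ by a constant times $2^{pk(|\alpha-\beta|-l)}\int_{B_{1+cd}(0)}\sum_{|\beta'|=l}\|D^{\beta'}f\|^{p}_{L^{p}(\cup_{i=1}^{H_{\mathcal{U}}}B_r(x^{(i)}_z)\cap\tilde\Omega_k)}\,dz$; since $2^{pk|\beta|}\cdot 2^{pk(|\alpha-\beta|-l)}=2^{pk|\beta|}\cdot 2^{-pk|\beta|}=1$, after interchanging $\sum_k$ with $\int dz$ and using $\sum_{k}\chi_{\tilde\Omega_k}\le N$ the $k$-series is controlled by $N$ times the $k$-free integral over $B_{1+cd}(0)$. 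Finally I would merge the finitely many ball families that occur (the $H$ balls of Lemma~\ref{mainlem}(i), defined for $z$ with $z_n>1/2$, and the $H_{\mathcal{U}}$ balls of Lemma~\ref{mainlem}(ii), defined on $B_{1+cd}(0)\supset\{z\in B_1(0):z_n>1/2\}$) into one collection of $H_{\mathcal{U}}$ balls, and absorb the single derivative $D^{\alpha}f$ into $\sum_{|\beta'|=l}D^{\beta'}f$, obtaining \eqref{teoball2}.

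\emph{Main obstacle.} The delicate step is the treatment of the terms with $\beta\neq 0$: one must exploit that $g_{\alpha-\beta}$ does not depend on $k$ in order to subtract it freely under the sum, and then the exponent bookkeeping $2^{pk|\beta|}\cdot 2^{pk(|\alpha-\beta|-l)}=2^{0}=1$ is exactly what makes the series in $k$ converge. It is here that the hypothesis $|\alpha|=l$ is essential, and this is precisely why Lemma~\ref{mainlem}(ii), with its gain $2^{pk(|\alpha|-l)}$, was proved. The remaining manipulations — the multiplicity-$2$ and bounded-overlap arguments, the interchanges of $\sum_k$ and $\int dz$, and the merging of the finitely many ball families — are routine once this core estimate is in place.
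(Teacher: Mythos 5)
Your proposal is correct and follows essentially the same route as the paper's proof: multiplicity $2$ of $\{{\rm supp}\,\psi_k\}$ plus Lemma~\ref{mainlem}(i) for part (i) and for the top-order term in part (ii), the cancellation $\sum_k D^{\gamma}\psi_k=0$ ($\gamma\neq 0$) to subtract the $k$-independent $g$'s, the exponent cancellation $2^{pk|\alpha-\beta|}2^{pk(|\beta|-l)}=1$ when $|\alpha|=l$, and the finite multiplicity of $\{\tilde\Omega_k\}$ to sum in $k$. The only differences are notational (you index the Leibniz terms by $D^{\beta}\psi_k D^{\alpha-\beta}f_k$ where the paper uses $D^{\alpha-\beta}\psi_k D^{\beta}f_k$), so no further comment is needed.
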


\begin{proof}

We begin with statement (i). Since  the multiplicity of the covering $\{{\rm supp} \psi_k\}_{k\in \Z}$ is equal to $2$, by \eqref{mainlem1} with $\alpha =0$ and H\"{o}lder inequality, we get

\begin{multline*}
\|Tf\|^p_{L^p(B_r\cap G)}=\int_{B_r\cap G}\left|\sum_{k=-\infty}^{+\infty}\psi_k(x)f_k(x) \right|^pdx\le\\
\le 2^{p-1}\int_{B_r\cap G}\sum_{k=-\infty}^{+\infty}|\psi_k(x)f_k(x)|^pdx 
=2^{p-1}\sum_{k=-\infty}^{+\infty}\int_{B_r\cap G}|\psi_k(x)f_k(x)|^pdx \\
\le 2^{p-1}\sum_{k=-\infty}^{+\infty}\int_{B_r\cap \tilde G_k}|\psi_k(x)f_k(x)|^pdx\le 2^{p-1}\sum_{k=-\infty}^{+\infty}\int_{B_r\cap \tilde G_k}|f_k(x)|^pdx\\
\le c\int_{\{z\in B_1(0):\, z_n>1/2\}} \sum_{k=-\infty}^{+\infty} \| f\|^p_{L^p(\cup_{i=1}^H B_{ r}(x^{(i)}_z) \cap \tilde \Omega_k )}dz\\
\le c \kappa_{ \Omega }\int _{\{z\in B_1(0):\, z_n>1/2\}}\| f\|^p_{L^p(\cup_{i=1}^H   B_{ r}(x^{(i)}_z) \cap  \Omega )}dz,
\end{multline*}
for all $f\in W^{l,p}(\Omega)$, where $\kappa_{\Omega}$  denotes the multiplicity of the covering $\{\tilde\Omega_k\}_{k\in \Z}$ of $\Omega$. Thus, in order to conclude it suffices to observe that        $\kappa_{\Omega}$ depends only on $M$ by \cite[Remark~12, Chap.~6]{b}. \\

We now prove statement (ii). Using statements (i) and (ii) in Lemma~\ref{mainlem}, statement (iv) in Lemma~\ref{partition},  and the simple equality
$\sum_{k\in \Z}D^{\alpha -\beta}\psi_k D^{\beta }f_k=\sum_{k\in \Z}D^{\alpha -\beta}\psi_k (D^{\beta }f_k -g_{\beta})$  for $\beta <\alpha$   (recall that $g_{\beta}$ does not depend on $k$)
we get
\begin{multline*}\label{mainlem4}
\| D^{\alpha} T f \|^p_{L^{p}(B_r\cap {\mathcal{U}} \cap G  )}\\
=\int_{B_r\cap {\mathcal{U}}\cap G }\Big|\sum_{\beta \le \alpha }\frac{\alpha!}{(\alpha -\beta )!\beta!}\sum_{k\in \Z}D^{\alpha -\beta}\psi_k D^{\beta }f_k   \Big|^pdx\\
\le c\int_{B_r\cap {\mathcal{U}}\cap G }\Big|\sum_{k\in \Z}\psi_k D^{\alpha }f_k    \Big|^pdx\\
+
c\int_{B_r\cap {\mathcal{U}}\cap G }\Big|\sum_{\beta < \alpha }\frac{\alpha!}{(\alpha -\beta )!\beta!}\sum_{k\in \Z}D^{\alpha -\beta}\psi_k D^{\beta }f_k    \Big|^pdx\\
\le c   \sum_{k\in \Z}\| D^{\alpha }f_k \|^p_{L^p(B_r\cap {\mathcal{U}}\cap \tilde G_k )}   + c  \sum_{\beta < \alpha ,   k\in \Z }    \| D^{\alpha -\beta}\psi_k( D^{\beta }f_k-g_{\beta}  )  \|^p_{L^p(B_r\cap {\mathcal{U}}\cap \tilde G_k )}\\ 
  \end{multline*}
\begin{multline*}
\le c\int_{B_1(0)} \sum_{k=-\infty}^{+\infty} \| D^{\alpha }f\|^p_{L^p(\cup_{i=1}^H  B_{ r}(x^{(i)}_z) \cap \tilde \Omega_k )}dz\\
+c \sum_{\beta < \alpha ,   k\in \Z }   2^{k|\alpha -\beta|p} \| D^{\beta }f_k-g_{\beta}    \|^p_{L^p(B_r\cap {\mathcal{U}}\cap \tilde G_k )}\\
\le c \kappa_{\Omega }\int _{B_1(0)}\| D^{\alpha }f\|^p_{L^p(\cup_{i=1}^{H_{\mathcal{U}}}  B_{ r}(x^{(i)}_z) \cap \tilde \Omega )}dz\\
+ c \sum_{\beta < \alpha ,   k\in \Z }   2^{pk|\alpha -\beta|}2^{pk(|\beta|-l)}\int_{B_{1+cd}(0)}  \sum_{|\beta|=l}\|D^{\beta} f\|^p_{L^{p}(\cup_{i=1}^{H_{\mathcal{U}}} B_{ r}(x^{(i)}_z)\cap \tilde \Omega_k ) }  dz\\
\le c \kappa_{\Omega }\int _{B_1(0)}\| D^{\alpha }f\|^p_{L^p(\cup_{i=1}^{H_{\mathcal{U}}}   B_{ r}(x^{(i)}_z) \cap \Omega )}dz\\
+c \kappa_{\Omega }\int_{B_{1+cd}(0)} \sum_{|\beta|=l}  \| D^{\beta}f\|^p_{L^{p}(\cup_{i=1}^{H_{\mathcal{U}}}   B_{ r}(x^{(i)}_z)\cap  \Omega) }  dz,
\end{multline*}
which allows to conclude. 
\end{proof}

\begin{cor}\label{morrey} Let $l\in \N_0$,  $p\in [1, \infty [$,   $\phi:]0,+\infty[\to ]0,+\infty[$. The following statements hold:
\begin{itemize}
\item[(i)] There exists $c>0$  such that 
\begin{equation}\label{morrey1} \|Tf\|_{M^{\phi  , \delta }_p(\R^n )}\le c\|f\|_{M^{\phi, \delta}_p(\Omega )} , \end{equation}
for all $\delta \in ]0,\infty ]$ and $f\in W^{l,p}(\Omega)$. 
\item[(ii)]   Let $D>0$. There exists $c>0$ such that 
\begin{equation}\label{morrey2} \|D^{\alpha }Tf\|_{M^{\phi ,\delta }_p(\R^n )}\le  c \sum_{|\beta |=l}\|D^{\beta}f\|_{M^{\phi , \delta }_p(\Omega )}  ,\end{equation}
for all $\delta \in ]0,\infty ]$, $f\in W^{l,p}(\Omega)$ with ${\rm supp}f\subset \{x\in \Omega :\ |\rho_n(x)|<D \}$, and $\alpha \in \N_0^n$  with $|\alpha|= l$.
\end{itemize}

\end{cor}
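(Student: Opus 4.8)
The plan is to deduce both statements directly from Theorem \ref{teoball} by integrating the local estimates against the radius and taking suprema, exploiting the definition of the Morrey norm. First I would fix $x\in\R^n$ and $0<r<\delta$ and estimate $\int_{B_r(x)\cap\R^n}|Tf|^p\,dy$ (resp. $\int_{B_r(x)}|D^\alpha Tf|^p\,dy$). Splitting $\R^n=\Omega\cup G$ (up to a null set), the contribution of $B_r(x)\cap\Omega$ is trivially bounded by $\phi(r)\|f\|_{M^{\phi,\delta}_p(\Omega)}^p$ since $Tf=f$ there; the work is in the piece over $B_r(x)\cap G$. If $B_r(x)\cap G=\emptyset$ there is nothing to do, so we may assume it is nonempty and apply Theorem \ref{teoball}(i) (for part (i), with $\alpha=0$) or part (ii) (for part (ii), taking $\mathcal U=\R^n$, which is legitimate once we have checked $d<\infty$ — see below).

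Concretely, for part (i) Theorem \ref{teoball}(i) gives
\begin{equation*}
\|Tf\|_{L^p(B_r(x)\cap G)}^p\le c\int_{\{z\in B_1(0):\,z_n>1/2\}}\|f\|^p_{L^p(\cup_{i=1}^H B_r(x_z^{(i)})\cap\Omega)}\,dz.
\end{equation*}
Each ball $B_r(x_z^{(i)})$ has radius exactly $r<\delta$, so by definition of the Morrey norm $\|f\|^p_{L^p(B_r(x_z^{(i)})\cap\Omega)}\le\phi(r)\|f\|^p_{M^{\phi,\delta}_p(\Omega)}$; summing the $H$ terms and integrating the constant function over $\{z\in B_1(0):z_n>1/2\}$ (a set of finite measure depending only on $n$) yields
\begin{equation*}
\|Tf\|_{L^p(B_r(x)\cap G)}^p\le c\,\phi(r)\,\|f\|^p_{M^{\phi,\delta}_p(\Omega)},
\end{equation*}
with $c$ depending only on $n,l,p,M,\omega$. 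Adding the $\Omega$-piece, dividing by $\phi(r)$, taking the $p$-th root and then the supremum over $x$ and over $0<r<\delta$ gives \eqref{morrey1}. For part (ii) the argument is identical but uses Theorem \ref{teoball}(ii): here one must first verify $d=\sup\{\rho_n(y):y\in B_r(x)\cap\mathcal U\}<\infty$ with $\mathcal U=\R^n$; this is where the support hypothesis ${\rm supp}f\subset\{x\in\Omega:|\rho_n(x)|<D\}$ enters. Indeed, one checks from the definition of $T$ (the reflected points entering $f_k$ stay within a bounded multiple of $2^{-k}$ from the boundary, and $f_k\equiv0$ once $2^{-k}$ exceeds a fixed multiple of $D$) that $D^\alpha Tf$ is supported in $\{|\rho_n|<c'D\}$ for a constant $c'$ depending only on $M,A,b$; hence $d\le c'D$, so $d<\infty$ and moreover the number of balls $H_{\mathcal U}$ and the dilation factor $1+cd\le 1+cc'D$ depend only on $n,M,D$. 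Applying Theorem \ref{teoball}(ii) and then bounding each $\|D^\beta f\|^p_{L^p(B_r(x_z^{(i)})\cap\Omega)}\le\phi(r)\|D^\beta f\|^p_{M^{\phi,\delta}_p(\Omega)}$, summing, and integrating over $B_{1+cd}(0)$ (again a fixed finite measure) gives $\|D^\alpha Tf\|_{L^p(B_r(x)\cap G)}^p\le c\,\phi(r)\sum_{|\beta|=l}\|D^\beta f\|^p_{M^{\phi,\delta}_p(\Omega)}$; combining with the trivial $\Omega$-estimate $D^\alpha Tf=D^\alpha f$ on $\Omega$ and taking suprema yields \eqref{morrey2}.

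The main obstacle is making the constant $c$ genuinely independent of $\delta$ (equivalently, uniform down to $\delta=\infty$), which is exactly why the local estimates of Theorem \ref{teoball} were designed around a single radius $r$: since every covering ball produced there has the \emph{same} radius $r$ as the original ball $B_r(x)$, the bound $\|f\|^p_{L^p(B_r(x_z^{(i)})\cap\Omega)}\le\phi(r)\|f\|^p_{M^{\phi,\delta}_p(\Omega)}$ is available for \emph{every} $r<\delta$ with the same $\phi(r)$, so no loss in $r$ or $\delta$ is incurred and the numbers $H$, $H_{\mathcal U}$, and the measures of the $z$-domains are all controlled by $n,M$ (and $D$ in part (ii)) only. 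A secondary point to handle with care is the measurability and the null-set decomposition $\R^n=\Omega\sqcup\partial\Omega\sqcup G$ with $|\partial\Omega|=0$ for a Lipschitz graph, so that the $L^p$ integral over $B_r(x)$ really splits as the sum of the integrals over $B_r(x)\cap\Omega$ and $B_r(x)\cap G$; this is routine. The support-propagation estimate for $T$ in part (ii) should be cross-referenced to the relevant computation in \cite[Chap.~6]{b} (the same inclusions $\tilde G_k-2^{-k}(\bar z,Az_n)\subseteq\tilde\Omega_k$ used in Lemma \ref{mainlem} give it immediately) rather than reproved in detail.
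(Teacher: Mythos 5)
Your proposal follows essentially the same route as the paper: split $\|D^\alpha Tf\|_{L^p(B_r)}^p$ into the $\Omega$- and $G$-pieces, apply Theorem~\ref{teoball}, and exploit the fact that every covering ball has the same radius $r$ so each term is absorbed into $\phi(r)$ times the Morrey norm; for (ii) the paper likewise uses the support hypothesis to show $Tf$ vanishes on $\{x\in G:\rho_n(x)>8D\}$ and then applies Theorem~\ref{teoball}(ii) with $\mathcal{U}=G_D:=\{x\in G:\rho_n(x)\le 8D\}$. The one slip is your choice $\mathcal{U}=\R^n$: with that choice $d=\sup\{\rho_n(y):y\in B_r\cap\mathcal{U}\}$ is \emph{not} bounded by $c'D$ (it grows without bound as the ball moves away from $\partial\Omega$), so $H_{\mathcal{U}}$ and the radius $1+cd$ would not be uniform; the fix, which your own support-propagation argument already supplies, is to take $\mathcal{U}$ equal to the region $\{x\in G:\rho_n(x)\le c'D\}$ and observe that restricting the $L^p$ norm to $B_r\cap\mathcal{U}$ loses nothing because $D^\alpha Tf$ vanishes off that set.
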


\begin{proof}Let $\delta \in ]0,\infty]$ be fixed and let $B_r$ be a ball in $\R^n$ with radius $r<\delta $. 
We begin with proving statement (i). Then by Theorem~\ref{teoball} (i) we have
\begin{multline*}
\| Tf \|_{L^p(B_r )}^p=\| f \|_{L^p(B_r \cap \Omega)}^p+ \| Tf \|_{L^p(B_r \cap G)}^p   \\
 \le \phi(r)\| f \|^p_{M^{\phi, \delta }_p(\Omega)} +
  c\int_{\{z\in B_1(0):\, z_n>1/2\}} \| f\|^p_{L^p(\cup_{i=1}^H   B_{ r}(x^{(i)}_z) \cap \Omega) }dz \\
 \le \phi(r)\| f \|^p_{M^{\phi , \delta }_p(\Omega)} +
  c   \phi(r) \int_{\{z\in B_1(0):\, z_n>1/2\}}\| f \|^p_{M^{\phi ,\delta }_p(\Omega)}  dz\,,
\end{multline*} 
for all  $f\in W^{l,p}(\Omega)$, which provides the validity of \eqref{morrey1}.

We now prove statement (ii). Let $f$ be as in statement (ii). We claim that $Tf(x)=0$ for all $x\in G$ with $\rho_n(x)>8D$. Indeed, assume that    $x\in G$ with $\rho_n(x)>8D$ and  let $k\in \Z$ be such that $x\in \tilde G_k$.  By \cite[Remark~11 and (6.83)]{b}, it follows that the value of $f_k(x)$ depends only
on the values of $f_{|\tilde\Omega_k}$ and in particular, if $f_{|\tilde\Omega_k}=0$ then $f_k(x)=0$.  Since $\rho_n(x)>8D$ and $x\in  \tilde G_k$, we have that $2^{-k+1}>8D$, hence $2^{-k-2}>D$ which implies that 
$\tilde\Omega _k\subset \{ y\in\Omega : |\rho_n(y)|>D   \}\subset ({\rm supp}f)^c$. 
Thus, $f_k(x)=0$ as we have claimed. 
We set
$$
G_D=\{x\in  G:\ \rho_n(x)\le 8D\}. 
$$
Since ${\rm supp} Tf\subset G_D$, we have

\begin{multline}\label{morrey4}
\| D^{\alpha }Tf \|_{L^p(B_r )}^p=\|D^{\alpha } f \|_{L^p(B_r \cap \Omega)}^p+ \| D^{\alpha }Tf \|_{L^p(B_r \cap G)}^p   \\
= \|D^{\alpha } f \|_{L^p(B_r \cap \Omega)}^p+ \| D^{\alpha }Tf \|_{L^p(B_r \cap G_D  )}^p \, .\\ 
\end{multline} 
   
By applying (\ref{teoball2}) with ${\mathcal{U}}=G_D$ and observing that $d=\sup\{\rho_n(x):\ x\in B_r\cap {\mathcal{U}}\}\le 8D$, by (\ref{morrey4}) we get

\begin{multline*}\label{morrey4bis}
\| D^{\alpha }Tf \|_{L^p(B_r )}^p\\
\le c  \phi(r)\| D^{\alpha }f \|^p_{M^{\phi ,\delta }_p(\Omega)} 
+ c \int_{B_{1+cd}(0)}\sum_{|\beta |=l}   \| D^{\beta} f\|^p_{L^{p}(\cup_{i=1}^{H_{\mathcal{U}}}   B_{ r}(x^{(i)}_z)\cap  \Omega) }  dz\\
\le  c  \phi(r)\| D^{\alpha }f \|^p_{M^{\phi ,\delta }_p(\Omega)} +
c  \phi(r)\int_{B_{1+8cD}(0)}  \sum_{|\beta|=l}  \|D^{\beta} f\|^p_{M^{\phi , \delta }_p(\Omega ) }dz \\
\end{multline*} 
which implies \eqref{morrey2}.

\end{proof}

\subsection{The case of bounded Lipschitz subgraphs }

In this subsection, we  consider  bounded elementary domains with Lipschitz boundaries. Namely, these domains are bounded Lipschitz subgraphs. However, in order to treat the case
of general Lipschitz domains in the next section, we need to take into account a number of parameters describing the size of such subgraphs. For this reason, following \cite{b}, it is convenient to give the following definition.

\begin{defi}\label{ele} Let $d, D >0$, $M\geq 0$. We say an open set ${\mathcal{H}}$ in $\R^n$ is a bounded elementary domain with Lipschitz boundary and parameters
$d, D$, $M$ if ${\mathcal{H}}$ can be represented as 
\begin{equation}\label{ele1}
{\mathcal{H}}=\left\{x\in \R^n:\ \bar x\in W,\ a_n<x_n<\varphi (\bar x) \right\}
\end{equation}
where  $W=\Pi_{i=1}^{n-1}]a_i,b_i[$, $-\infty <a_i<b_i<\infty $ for all $i=1,\dots ,n$, 
${\rm diam }{\mathcal{H}}<D$ and $\varphi :W\to \R$ is a   Lipschitz function such that
$$
a_n+d<\varphi\ \ {\rm and}\ \ {\rm Lip }\varphi \le M. 
$$ 
\end{defi}

We note that if $\varphi$ is a Lipschitz function  as in Definition~\ref{ele}, then $\varphi $ can be extended to the whole of $\R^{n-1}$ 
by means of  a Lipschitz function $F_{\varphi}$ such that ${\rm Lip}F_{\varphi}={\rm Lip}\varphi $. In particular, given an elementary domain ${\mathcal{H}}$ represented as 
in (\ref{ele1}), we can define the following open set
\begin{equation*}\label{ele2}
\Omega_{\mathcal{H}}=\{x\in \R^n:\ x_n< F_{\varphi }(\bar x) \}.
\end{equation*}

We find it convenient to set
$$
\widetilde W^{l,p}({\mathcal{H}})=\left\{f\in W^{l,p}({\mathcal{H}}) :\ {\rm supp }f  \subset \Pi_{i=1}^{n}]a_i,b_i[  \right\}.
$$

Given a function $f\in \widetilde W^{l,p}({\mathcal{H}} )$  then the extension-by-zero $f_0$ of $f$ (defined by
$f_0(x)=f(x)$ for all $x\in {\mathcal{H}}$ and $f_0(x)=0$ for all $x\in \Omega_{\mathcal{H}}$) belongs to $W^{l,p}(\Omega_{\mathcal{H}})$, because the distance of ${\rm supp} f$ from the boundary of  $\Pi_{i=1}^{n}]a_i,b_i[$ is positive, hence a standard truncation argument is applicable. Consider now the extension operator $T$ 
defined by (\ref{burext}) for the open set $\Omega=\Omega_{\mathcal{H}}$. For all functions $f\in \widetilde W^{l,p}({\mathcal{H}} )$ we set
\begin{equation}\label{th}
T_{\mathcal{H}}f=Tf_0. 
\end{equation}

It is clear that $T_{\mathcal{H}}f\in W^{l,p}(\R^N)$ for all  $f\in \widetilde W^{l,p}({\mathcal{H}} )$. The following theorem is an easy consequence of Corollary~\ref{morrey}.

\begin{theorem}\label{morreyele}  Let $l\in \N_0$,  $p\in [1,\infty [$  and $\phi:]0,+\infty[\to ]0,+\infty[$.
 Let ${\mathcal{H}} $ be a bounded elementary domain with Lipschitz boundary and parameters
$ d, D$, $M$. Then there exists $c>0$ depending only on $n, l,p, D, M$ such that  
\begin{equation*}\label{morrey1ele} \|T_{\mathcal{H}}f\|_{M^{\phi ,\delta }_p(\R^n )}\le c\|f\|_{M^{\phi ,\delta }_p({\mathcal{H}})},\end{equation*}
and 
\begin{equation}\label{morrey2ele} \|D^{\alpha }T_{\mathcal{H}}f\|_{M^{\phi ,\delta }_p(\R^n )}\le       c\sum_{|\beta |=l}\|D^{\beta}f\|_{M^{\phi , \delta }_p({\mathcal{H}} )},\end{equation}
for all $\delta \in ]0, \infty ]$, $f\in \widetilde W^{l,p}({\mathcal{H}})$ and  $\alpha \in \N_0^n$  with $|\alpha|= l$.
\end{theorem}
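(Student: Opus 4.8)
The strategy is to reduce the bounded elementary domain $\mathcal{H}$ to the unbounded subgraph case already treated in Corollary~\ref{morrey}, using the definition $T_{\mathcal{H}}f = Tf_0$ from \eqref{th}. First I would record that for $f\in\widetilde W^{l,p}(\mathcal{H})$ the extension-by-zero $f_0$ belongs to $W^{l,p}(\Omega_{\mathcal{H}})$, where $\Omega_{\mathcal{H}}$ is the unbounded Lipschitz subgraph with defining function $F_\varphi$ satisfying $\mathrm{Lip}\,F_\varphi=\mathrm{Lip}\,\varphi\le M$; this is exactly the situation covered by the previous subsection. The key geometric observation is that because $\mathrm{diam}\,\mathcal{H}<D$ and $f_0$ vanishes outside $\mathcal{H}$, we have $\mathrm{supp}\,f_0\subset\{x\in\Omega_{\mathcal{H}}:\ |\rho_n(x)|<D\}$: indeed for $x$ in the support, $0\le\varphi(\bar x)-x_n = \rho_n(x)$, and since $x_n>a_n$ with $a_n+d<\varphi$ and everything sits inside a set of diameter less than $D$, the vertical distance $\rho_n(x)$ is bounded by $D$. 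Hence $f_0$ satisfies precisely the hypothesis ${\rm supp}\,f_0\subset\{x\in\Omega_{\mathcal{H}}:\ |\rho_n(x)|<D\}$ required in Corollary~\ref{morrey}(ii).

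Next I would apply Corollary~\ref{morrey}. For the zeroth-order estimate, part (i) of that corollary gives
\[
\|T_{\mathcal{H}}f\|_{M^{\phi,\delta}_p(\R^n)} = \|Tf_0\|_{M^{\phi,\delta}_p(\R^n)} \le c\,\|f_0\|_{M^{\phi,\delta}_p(\Omega_{\mathcal{H}})} = c\,\|f\|_{M^{\phi,\delta}_p(\mathcal{H})},
\]
the last equality because $f_0$ vanishes on $\Omega_{\mathcal{H}}\setminus\mathcal{H}$ and agrees with $f$ on $\mathcal{H}$, so every local integral $\int_{B_r(x)\cap\Omega_{\mathcal{H}}}|f_0|^p$ equals $\int_{B_r(x)\cap\mathcal{H}}|f|^p$. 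For the derivative estimate \eqref{morrey2ele}, part (ii) of the corollary applied with the parameter $D$ (as in its hypothesis ${\rm supp}f\subset\{|\rho_n|<D\}$) gives, for $|\alpha|=l$,
\[
\|D^\alpha T_{\mathcal{H}}f\|_{M^{\phi,\delta}_p(\R^n)} = \|D^\alpha Tf_0\|_{M^{\phi,\delta}_p(\R^n)} \le c\sum_{|\beta|=l}\|D^\beta f_0\|_{M^{\phi,\delta}_p(\Omega_{\mathcal{H}})} = c\sum_{|\beta|=l}\|D^\beta f\|_{M^{\phi,\delta}_p(\mathcal{H})},
\]
again using that $D^\beta f_0$ is the extension by zero of $D^\beta f$ (valid because $f_0\in W^{l,p}(\Omega_{\mathcal{H}})$, so its weak derivatives restrict correctly and vanish outside $\mathcal{H}$).

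Finally I would check the claim about the constant: Corollary~\ref{morrey}(i) produces a $c$ depending only on $n,l,p$ and the Lipschitz constant of the subgraph (through the multiplicity $\kappa_\Omega$ and the constants in Lemma~\ref{mainlem} and Theorem~\ref{teoball}), while part (ii) produces a $c$ depending additionally on $D$; since $\mathrm{Lip}\,F_\varphi\le M$, all these constants depend only on $n,l,p,D,M$, as asserted. The only point requiring a little care — and the main (mild) obstacle — is verifying cleanly that $f_0\in W^{l,p}(\Omega_{\mathcal{H}})$ with $D^\beta f_0=(D^\beta f)_0$ and that $\mathrm{supp}\,f_0$ lies in the prescribed strip $\{|\rho_n|<D\}$; both are essentially the standard truncation argument already invoked in the paragraph preceding \eqref{th}, combined with the diameter bound, so this should be short.
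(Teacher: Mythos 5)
Your proposal is correct and follows essentially the same route as the paper: the paper's proof is precisely the one-line observation that ${\rm supp}\, f_0\subset \{x\in \Omega_{\mathcal{H}}:\ |\rho_n(x)|<D\}$ (via the diameter bound) followed by an application of Corollary~\ref{morrey} to $f_0$. The extra details you supply (the identification of the local norms of $f_0$ on $\Omega_{\mathcal{H}}$ with those of $f$ on $\mathcal{H}$, and the tracking of the constant's dependence on $n,l,p,D,M$) are consistent with what the paper leaves implicit; only note the harmless sign slip that $|\rho_n(x)|=\varphi(\bar x)-x_n=-\rho_n(x)$ for $x\in\mathcal{H}$.
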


\begin{proof} The proof immediately follows by Corollary~\ref{morrey} and by observing that for all $f\in \widetilde W^{l,p}(H)$
we have that ${\rm supp }f_0\subset \{x\in \Omega_H:\ |\rho_n(x)|<D \}$.
\end{proof}

\section{Burenkov's Extension Operator on general Lipschitz open sets}

We recall the definition of open set with Lipschitz boundary. Here and in the sequel, given a set $C$ in $\R^n$ and $d>0$ we denote by $C_d$ the set
$\{x\in C:{\rm dist} (x,\partial C)>d\}$.

\begin{defi} Let $d >0$, $M\geq 0$, $s\in \N \cup \{\infty \}$. Let $\{V_{j}\}_{j=1}^s$ be a family of cuboids, i.e. for every $j=\overline{1,s}$ there exists 
an isometry $\lambda_j$ in $\R^n$ such that 
$$
\lambda_j( V_j  )= \Pi_{i=1}^n]a_{i,j}, b_{i,j}[
$$ 
where $0<a_{i,j}<a_{i,j}+d<b_{i,j}$. Assume that $D:=\sup_{j=\overline{1,s}}{\rm diam }V_j< \infty $, $(V_j)_{d}\ne \emptyset $ for all $j=\overline{1,s}$,   and that the multiplicity of the covering $\{V_{j}\}_{j=1}^s$ is finite. 
We then say that ${\mathcal{A} }=(s,d, \{V_{j}\}_{j=1}^s, \{\lambda_{j}\}_{j=1}^s )$ is an atlas.  

Let $M\geq 0$. We say that an open set $\Omega$ in $\R^n$ is of class $C^{0,1}_M({\mathcal{A}})$  if the following conditions are satisfied:\\
 
(i) For every  $j=\overline{1,s}$, we have $\Omega \cap (V_j)_d\ne \emptyset$. \\

(ii) $\Omega \subset \cup_{j=1}^{s}(V_j)_d$.\\

(iii) For every  $j=\overline{1,s}$, the set ${\mathcal{H}}_j:=\lambda_j(\Omega \cap V_j)$  satisfies the following condition: either ${\mathcal{H}}_j= \Pi_{i=1}^n]a_{i,j}, b_{i,j}[ $ (in which case $V_j\subset \Omega $),  or ${\mathcal{H}}_j$ is a bounded  elementary domain 
 with Lipschitz boundary and parameters
$d, D$, $M$ of the form
\begin{equation*}\label{ele1bis}
{\mathcal{H}}_j=\left\{x\in \R^n:\ \bar x\in W_j,\ a_{n,j}<x_n<\varphi_j (\bar x) \right\}
\end{equation*}
where  $\varphi_j $ is a real-valued  Lipschitz function defined on $W_j= \Pi_{i=1}^{n-1}]a_{i,j}, b_{i,j}[$ such that
$$
a_{n,j}+d<\varphi_j\ \ {\rm and}\ \ {\rm Lip }\varphi_j \le M 
$$ 
(in which case $V_j\cap \partial \Omega \ne \emptyset$).
\end{defi}

Let ${\mathcal{A}}=(s,d, \{V_{j}\}_{j=1}^s, \{\lambda_{j}\}_{j=1}^s )$ be an atlas and let ${\mathcal{H}}_j=\lambda_j(\Omega \cap V_j)$ for all $j=\overline{1,s}$,  as above.  For every $j=\overline{1,s}$, we consider an extension operator
$T_{{\mathcal{H}}_j}$  from $\widetilde W^{l,p}({\mathcal{H}}_j)$ to $W^{l,p}(\R^n)$ which is the operator defined by (\ref{th}) if $V_j\cap \partial\Omega \ne \emptyset$ and 
is just the extension-by-zero operator if $V_j\subset \Omega $. 

Next, for every $j=\overline{1,s}$, we consider the push-forward 
operator $\Lambda_j$  from $W^{l,p}(\Omega \cap V_j)$ to $W^{l,p}({\mathcal{H}}_j)$ defined by $\Lambda_jf=f\circ \lambda_j^{(-1)}$ for all $f\in W^{l,p}(\Omega \cap V_j)$   and we set $\widetilde W^{l,p}(\Omega \cap V_j) = \Lambda_j^{(-1)}(\widetilde W^{l,p}({\mathcal{H}}_j)    ) $. Note that $\widetilde W^{l,p}(\Omega \cap V_j)$  is the space of functions in $W^{l,p}(\Omega \cap V_j)$ such that their support has positive distance from the boundary of $V_j$.    Moreover, we  consider the corresponding pull-back operator defined now from $W^{l,p}(\R^n)$ to itself, which we call directly $\Lambda_j^{(-1)}$ and which is defined by  $\Lambda_j^{(-1)}u=u\circ \lambda_j$ for all $u\in 
 W^{l,p}(\R^n)$.  Finaly, we set
\begin{equation*}
T_j:=\Lambda_j^{(-1)}\circ T_{{\mathcal{H}}_j}\circ \Lambda_j,
\end{equation*}
and we note that $T_j$ is a well-defined linear continuous extension operator from $\widetilde W^{l,p}(\Omega \cap V_j)$ to $ W^{l,p}(\R^n)$.

Following \cite[p.265]{b}, given an open set $\Omega $ of class $C^{0,1}_M({\mathcal{A}})$, we consider a family of   functions $\{\psi\}_{j=1}^s$  such that $\psi_j\in C^{\infty }_c(\R^n)$, $ {\rm supp} \psi_j\subset  (V_j)_{d} $, $0\le \psi_j\le 1$, $\sum_{j=1}^s\psi_j^2(x)=1$ for all $x\in \Omega$ and such that $\| D^{\alpha }\psi_j\|_{L^{\infty }(\R^n)}$ $\le M$ for all $j=\overline{1,s}$ and $\alpha\in \N_0^n$ with $|\alpha |\le l$, where $M$ depends only on $n,l,d$.   

We are able now to define the Burenkov's Extension Operator $T$ from $W^{l,p}(\Omega )$ to $W^{l,p}(\R^n)$ as follows:

\begin{equation}
\label{burextgen}
Tf= \sum_{j=1}^s\psi_j  T_j(f\psi_j),
\end{equation}
for all $f\in W^{l,p}(\Omega )$.  Note that ${\rm supp }\Lambda_j( f\psi_j  )\subset \Pi_{i=1}^s]a_j,b_j[$, hence $T_j (f\psi_j)$ is well-defined.

Before giving the proof of the main result of this section, we  need to prove the  following lemma.

\begin{lemma}\label{lemmarot}
Let $l\in \N_0$,  $p\in [1,\infty [$  and $\phi:]0,+\infty[\to ]0,+\infty[$. Let ${\mathcal{A}}$  be an atlas in $\R^n$, $M\geq 0$ and $\Omega$ be an open set  of class $C^{0,1}_M({\mathcal{A}})$.  Then there exists  $c>0$ depending only on $n, {\mathcal{A}}, M, p$ such that 
\begin{equation*}\label{morrey1elebis} \|T_jf\|_{M^{\phi ,\delta }_p(\R^n )}\le c\|f\|_{M^{\phi ,\delta}_p(\Omega \cap V_j )  },\end{equation*}
and 
\begin{equation}\label{morrey2elebis} \|D^{\alpha }T_jf\|_{M^{\phi ,\delta }_p(\R^n )}\le c \sum_{|\beta |= l}\|D^{\beta}f\|_{M^{\phi ,\delta }_p(\Omega \cap V_j )},\end{equation}
for all $\delta \in ]0,\infty]$, $j=\overline{1,s}$,  $f\in \widetilde W^{l,p}(\Omega \cap V_j )$ and  $\alpha \in \N_0^n$  with $|\alpha|= l$.
\end{lemma}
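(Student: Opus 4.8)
The plan is to reduce Lemma~\ref{lemmarot} to the already-established Theorem~\ref{morreyele} by tracking the effect of the isometries $\lambda_j$ on Morrey norms. Since $T_j=\Lambda_j^{(-1)}\circ T_{{\mathcal{H}}_j}\circ \Lambda_j$, where $\Lambda_j f=f\circ \lambda_j^{(-1)}$ and the outer $\Lambda_j^{(-1)}$ is the pull-back $u\mapsto u\circ\lambda_j$ on $\R^n$, the key observation is that composition with an isometry of $\R^n$ leaves the Morrey norm $M^{\phi,\delta}_p$ invariant: an isometry maps balls of radius $r$ to balls of radius $r$ bijectively and is measure-preserving, so for any measurable $E$, $\int_{B_r(x)\cap\lambda_j(E)}|u|^p\,dy=\int_{B_r(\lambda_j^{-1}(x))\cap E}|u\circ\lambda_j|^p\,dy$, and taking the supremum over $x$ and over $0<r<\delta$ gives $\|u\circ\lambda_j\|_{M^{\phi,\delta}_p(E)}=\|u\|_{M^{\phi,\delta}_p(\lambda_j(E))}$. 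The same identity, applied with derivatives, uses that an isometry is affine, so $D^\alpha(u\circ\lambda_j)$ is a fixed linear combination (with coefficients bounded in terms of the orthogonal part of $\lambda_j$, hence by $1$) of the functions $(D^\beta u)\circ\lambda_j$ with $|\beta|=|\alpha|$; thus $\|D^\alpha(u\circ\lambda_j)\|_{M^{\phi,\delta}_p}\le c\sum_{|\beta|=|\alpha|}\|D^\beta u\|_{M^{\phi,\delta}_p}\circ\lambda_j$ in the obvious sense, and symmetrically.

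With this invariance in hand the proof is a short chain of inequalities. First I would write, for $f\in\widetilde W^{l,p}(\Omega\cap V_j)$,
\begin{equation*}
\|T_jf\|_{M^{\phi,\delta}_p(\R^n)}=\|\,(T_{{\mathcal{H}}_j}\Lambda_j f)\circ\lambda_j\,\|_{M^{\phi,\delta}_p(\R^n)}=\|T_{{\mathcal{H}}_j}\Lambda_j f\|_{M^{\phi,\delta}_p(\R^n)},
\end{equation*}
using isometry invariance (here $\lambda_j(\R^n)=\R^n$). Then Theorem~\ref{morreyele}, applied to the elementary domain ${\mathcal{H}}_j$ (whose parameters $d,D,M$ are those of the atlas ${\mathcal{A}}$ and of the class $C^{0,1}_M({\mathcal{A}})$, so the constant depends only on $n,l,p,D,M$), gives
\begin{equation*}
\|T_{{\mathcal{H}}_j}\Lambda_j f\|_{M^{\phi,\delta}_p(\R^n)}\le c\|\Lambda_j f\|_{M^{\phi,\delta}_p({\mathcal{H}}_j)},\qquad \|D^\alpha T_{{\mathcal{H}}_j}\Lambda_j f\|_{M^{\phi,\delta}_p(\R^n)}\le c\sum_{|\beta|=l}\|D^\beta \Lambda_j f\|_{M^{\phi,\delta}_p({\mathcal{H}}_j)}.
\end{equation*}
Finally, since ${\mathcal{H}}_j=\lambda_j(\Omega\cap V_j)$ and $\Lambda_j f=f\circ\lambda_j^{(-1)}$, isometry invariance again yields $\|\Lambda_j f\|_{M^{\phi,\delta}_p({\mathcal{H}}_j)}=\|f\|_{M^{\phi,\delta}_p(\Omega\cap V_j)}$ and, for the derivative estimate, $\sum_{|\beta|=l}\|D^\beta\Lambda_j f\|_{M^{\phi,\delta}_p({\mathcal{H}}_j)}\le c\sum_{|\beta|=l}\|D^\beta f\|_{M^{\phi,\delta}_p(\Omega\cap V_j)}$, where the constant absorbs the coefficients coming from the chain rule for the affine map $\lambda_j^{(-1)}$. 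Concatenating these gives the two displayed inequalities of the lemma. One should also remark, to justify applying $T_{{\mathcal{H}}_j}$ and the derivative estimate $(\ref{morrey2ele})$, that $\Lambda_j f\in\widetilde W^{l,p}({\mathcal{H}}_j)$ precisely because $f\in\widetilde W^{l,p}(\Omega\cap V_j)=\Lambda_j^{(-1)}(\widetilde W^{l,p}({\mathcal{H}}_j))$ by definition.

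The only genuinely delicate point is bookkeeping on the constants: one must check that the constant produced by Theorem~\ref{morreyele} for ${\mathcal{H}}_j$ is uniform in $j$, which holds because all ${\mathcal{H}}_j$ share the parameters $d,D,M$ of the atlas, and that the chain-rule constants for $\lambda_j$ and $\lambda_j^{(-1)}$ are uniformly bounded, which holds because the linear parts of these isometries are orthogonal matrices. Thus $c$ depends only on $n,{\mathcal{A}},M,p$ (through $n,l,p,D,M$), and in particular not on $f$, $\delta$, or $j$. This is really the whole content; there is no hard analytic estimate here, the work having been done in Theorem~\ref{teoball} and Corollary~\ref{morrey}. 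For the case $V_j\subset\Omega$, where $T_j$ is extension-by-zero, the estimates are trivial since $\|D^\alpha(\text{zero-extension of }f)\|_{M^{\phi,\delta}_p(\R^n)}=\|D^\alpha f\|_{M^{\phi,\delta}_p(\Omega\cap V_j)}$ directly, and one may simply note this separately.
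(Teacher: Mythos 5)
Your proposal is correct and follows essentially the same route as the paper: the authors likewise change variables via the isometry $\lambda_j$, apply the chain rule, and invoke the estimate for the bounded elementary domain ${\mathcal{H}}_j$ (whose constant depends only on $n,l,p,D,M$ and is therefore uniform in $j$), treating the case $V_j\subset\Omega$ as trivial. Your additional remarks on isometry invariance of the Morrey norm and on $\Lambda_j f\in\widetilde W^{l,p}({\mathcal{H}}_j)$ just make explicit what the paper leaves implicit.
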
 

\begin{proof} The proof is an easy consequence of Lemma~\ref{morrey}. For the convenience of the reader we write a few details for the proof of (\ref{morrey2elebis}). We assume 
directly that $V_j\cap \partial \Omega \ne \emptyset$ since the other case $V_j\cap \partial \Omega = \emptyset$ is trivial. Let $\alpha, \delta  $ and $f$ be as in the statement and let $B_r$
be a ball in $\R^n$ with radius $r <\delta $.  By changing variables in integrals and applying the chain rule, we immediately deduce from   Corollary~\ref{morrey} that 
\begin{multline*}
\int_{B_r}|D^{\alpha }T_jf |^pdx  =\int_{B_r}|D^{\alpha}   \left(T_{{\mathcal{H}}_j}\circ \Lambda_j(f)\right)  (\lambda_j(x))|^pdx\\
=\int_{\lambda_j(B_r)}|D^{\alpha}   \left(T_{{\mathcal{H}}_j}\circ \Lambda_j(f)\right) (y)|^pdy  \le c \phi (r) \sum_{|\beta   |= l}
\| D^{\beta }(\Lambda_j(f) )\|^p_{M^{\phi ,\delta}_p( {\mathcal{H}}_j)} \\
\le c \phi (r) \sum_{|\beta   |= l}
\| D^{\beta }f \|^p_{M^{\phi , \delta }_p( {\mathcal{H}}_j)} ,
\end{multline*}
which allows to conclude. 
\end{proof}

\begin{remark} \label{remorder} We note that in Lemma~\ref{lemmarot}, Theorem~\ref{morreyele}, and Corollary~\ref{morrey}, inequalities (\ref{morrey2elebis}),  (\ref{morrey2ele}) and (\ref{morrey2}) allow to estimate the derivatives of the extended function $Tf$ of order $|\alpha|=l$ by means of all derivatives of $f$ of order $|\beta |=l$, and this is valid for all functions $f$  in the Sobolev space $W^{l,p}$. Clearly, Burenkov's extension operator defined for functions in $W^{l,p}$ works also for functions in $W^{m,p}$ for any $m\le l$. This implies that all above mentioned inequalities hold also for any $|\alpha |\le l$, provided one replaces in the right-hand sides $|\beta|=l$ by $|\beta|=|\alpha|$.
\end{remark}

Finally, we can prove the following

\begin{theorem}\label{finalthm} Let ${\mathcal{A}}$ be an atlas in $\R^n$, $M>0$ and $\Omega $
be an open set of class $C^{0,1}_M({\mathcal{A}})$. Let $l\in \N$, $p\in [1,\infty [$ and $\phi:]0,+\infty[\to ]0,+\infty[$. 

Then for every $\delta \in ]0,\infty [$ there exists $c>0$ depending only on
$n, {\mathcal{A}}$, $M$, $l,p, \delta$ such that inequality (\ref{finalthm1intro}) holds
for all $f\in W^{l,p}(\Omega)$ and $|\alpha |\le l$.

Moreover, if $\Omega $ is bounded, $c$ can be chosen to be independent of $\delta$,  hence
\begin{equation}\label{finalthm2}
\| D^{\alpha}Tf\|_{M^{\phi  }_p(\R^n)}\le c\sum_{0\le |\beta |\le |\alpha|} \| D^{\beta}f\|_{M^{\phi }_p(\Omega)},
\end{equation}
for all $f\in W^{l,p}(\Omega)$ and $|\alpha |\le l$.
\end{theorem}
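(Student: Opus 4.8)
The idea is to reduce the statement on the general Lipschitz open set $\Omega$ to the local estimate already established in Lemma~\ref{lemmarot}, using the partition of unity $\{\psi_j\}_{j=1}^s$ and the definition (\ref{burextgen}) of $T$. First I would fix $\delta\in\,]0,\infty[$, fix a multi-index $\alpha$ with $|\alpha|\le l$, and take an arbitrary ball $B_r$ in $\R^n$ with $r<\delta$. Expanding $D^{\alpha}Tf=\sum_{j=1}^s D^{\alpha}\bigl(\psi_j\, T_j(f\psi_j)\bigr)$ and applying the Leibniz rule, one bounds $\|D^{\alpha}Tf\|^p_{L^p(B_r)}$ by a finite sum (over $j$ and over $\beta\le\alpha$) of terms of the form $\|D^{\alpha-\beta}\psi_j\|^p_{L^{\infty}}\,\|D^{\beta}T_j(f\psi_j)\|^p_{L^p(B_r)}$; the $L^{\infty}$-norms of the derivatives of $\psi_j$ are controlled by a constant depending only on $n,l,d$, and the sum over $j$ has boundedly many nonzero terms because the covering $\{V_j\}$ has finite multiplicity. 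Here I would invoke Remark~\ref{remorder} so that Lemma~\ref{lemmarot} may be applied with $|\alpha|$ in place of $l$ in the right-hand side, and then Lemma~\ref{lemmarot} gives, for each $j$,
\[
\|D^{\beta}T_j(f\psi_j)\|^p_{L^p(B_r)}\le c\,\phi(r)\sum_{0\le|\gamma|\le|\alpha|}\|D^{\gamma}(f\psi_j)\|^p_{M^{\phi,\delta}_p(\Omega\cap V_j)}.
\]
Another application of Leibniz on $D^{\gamma}(f\psi_j)$, together with the bounds on $D^{\sigma}\psi_j$ and the trivial inequality $\|D^{\gamma'}f\|_{M^{\phi,\delta}_p(\Omega\cap V_j)}\le\|D^{\gamma'}f\|_{M^{\phi,\delta}_p(\Omega)}$, then yields
\[
\|D^{\alpha}Tf\|^p_{L^p(B_r)}\le c\,\phi(r)\sum_{0\le|\beta|\le|\alpha|}\|D^{\beta}f\|^p_{M^{\phi,\delta}_p(\Omega)},
\]
with $c$ depending only on $n,{\mathcal{A}},M,l,p$. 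Dividing by $\phi(r)$, taking the supremum over $x\in\R^n$ and over $0<r<\delta$, and taking $p$-th roots (absorbing the sum inside via $\|\cdot\|_{\ell^p}\le\|\cdot\|_{\ell^1}$) gives exactly (\ref{finalthm1intro}).

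For the second part, when $\Omega$ is bounded, I would note that the whole estimate above is already uniform in $\delta$ as long as $\delta$ is bounded: indeed the only place $\delta$ entered was through the constant $c$ in Lemma~\ref{lemmarot}, which by its statement depends only on $n,{\mathcal{A}},M,p$ and \emph{not} on $\delta$. Hence the chain of inequalities above holds with a $\delta$-independent $c$ for every $\delta\in\,]0,\infty[$. To reach $\delta=\infty$ one must also handle balls $B_r$ of arbitrarily large radius: since $\Omega$ is bounded, $\mathrm{diam}\,\Omega<\infty$, so $\mathrm{supp}\,Tf$ is contained in a fixed bounded set $K$ (a fixed neighbourhood of $\overline{\Omega}$, using that each $T_j$ only spreads mass within the cuboid $V_j$), and therefore for $r$ larger than $\mathrm{diam}\,K$ the ball $B_r(x)$ either misses $\mathrm{supp}\,Tf$ (the integrand is zero) or can be replaced by a ball of radius $\mathrm{diam}\,K$ without changing $\int_{B_r(x)}|D^{\alpha}Tf|^p$; invoking monotonicity of $\phi$ is not needed — one simply compares with the finite-$\delta$ supremum taken at $\delta_0=\mathrm{diam}\,K$. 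Thus $\|D^{\alpha}Tf\|_{M^{\phi}_p(\R^n)}=\|D^{\alpha}Tf\|_{M^{\phi,\delta_0}_p(\R^n)}$ and the bound already proved at $\delta=\delta_0$ applies, proving (\ref{finalthm2}).

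The main obstacle I anticipate is the bookkeeping in the double Leibniz expansion combined with the claim that the relevant constant stays independent of $s$ (hence of $\Omega$ within the class $C^{0,1}_M({\mathcal{A}})$): one must carefully use that for each fixed $x$ only finitely many $\psi_j$ are nonzero near $x$, with the number bounded by the multiplicity of $\{V_j\}$, which is part of the data of the atlas ${\mathcal{A}}$. A secondary subtlety is the passage to $\delta=\infty$ in the bounded case: one should make precise, using the support-localization property of Burenkov's operator (Remark~11 and formula (6.83) in \cite{b}, already used in the proof of Corollary~\ref{morrey}), that $\mathrm{supp}\,Tf$ lies in a bounded set whose size is controlled by $D$ and $d$, so that the Morrey supremum over all radii reduces to a supremum over radii $r\le\delta_0$. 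Once these two points are in place, the rest is a routine repetition of the estimates already carried out in the proofs of Theorem~\ref{teoball} and Corollary~\ref{morrey}.
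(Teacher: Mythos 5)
Your reduction of the estimate to Lemma~\ref{lemmarot} via the partition of unity $\{\psi_j\}$ and a double Leibniz expansion is exactly the paper's strategy, but there is a genuine gap in your bookkeeping of the constant, and it propagates into the second half of the proof. After the pointwise convexity inequality $|\sum_j g_j|^p\le \kappa^{p-1}\sum_j|g_j|^p$ (where $\kappa$ is the multiplicity of $\{V_j\}$), you are left with $\sum_{j}\int_{B_r\cap V_j}|D^{\gamma}T_j(f\psi_j)|^p\,dx$, and each summand must be bounded \emph{separately} through the Morrey norm, each one by the same quantity $c\,\phi(r)\sum_{\beta}\|D^{\beta}f\|^p_{M^{\phi,\delta}_p(\Omega)}$. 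The number of nonzero summands is not the multiplicity $\kappa$ but $\tilde s=\sharp\{j:\ B_r\cap V_j\ne\emptyset\}$, which, when $s=\infty$, grows without bound as $r$ grows; since $r$ ranges up to $\delta$, the resulting constant $c\,\tilde s$ genuinely depends on $\delta$ (one cannot trade the factor $\tilde s$ for $\kappa$, because the Morrey norm is a supremum, not an integral, so the bounded overlap of the sets $B_r\cap V_j$ does not help). This is precisely why the theorem, as stated, lets $c$ depend on $\delta$ in the general case; your claim that $c$ depends only on $n,{\mathcal{A}},M,l,p$ is not justified and is in fact false for general $\phi$ and an atlas with $s=\infty$.

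Because of this, your treatment of the bounded case misses the actual mechanism and substitutes a flawed argument. The reason $c$ can be taken independent of $\delta$ when $\Omega$ is bounded is simply that then $s<\infty$, so $\tilde s\le s$ uniformly in $r$ and $\delta$; once the constant is uniform in $\delta$, the case $\delta=\infty$ follows by taking the supremum over $\delta$ on both sides of \eqref{finalthm1intro}, both sides being nondecreasing in $\delta$, with no need to localize the support of $Tf$. Your alternative route via ${\rm supp}\, Tf\subset K$ rests on the identity $\|D^{\alpha}Tf\|_{M^{\phi}_p(\R^n)}=\|D^{\alpha}Tf\|_{M^{\phi,\delta_0}_p(\R^n)}$ with $\delta_0={\rm diam}\, K$, which is false for a general $\phi:]0,\infty[\to]0,\infty[$: for $r>\delta_0$ one only gets $\phi(r)^{-1}\int_{B_r(x)}|D^{\alpha}Tf|^p\le \bigl(\phi(\delta_0)/\phi(r)\bigr)\,\|D^{\alpha}Tf\|^p_{M^{\phi,\delta_0}_p(\R^n)}$ after recentering, and without monotonicity (or a positive lower bound at infinity) of $\phi$ the factor $\phi(\delta_0)/\phi(r)$ is unbounded. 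Replace both steps by the count $\tilde s\le s$ and the monotone passage $\delta\to\infty$, and the proof closes.
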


\begin{proof}   Let $\delta \in ]0,\infty [$ be fixed and $B_r$ be a ball of radius $r$ with $0<r\le \delta$. We set 
$\tilde S =\{j\in \overline{1,s}:\ B_r\cap V_j\ne  \emptyset \}$, 
$\tilde s=\sharp \tilde S$ and we note that $\tilde s$ is a finite number depending only on ${\mathcal{A}}$
and $\delta$. 

By Lemma~\ref{lemmarot}  and Remark~\ref{remorder} we get
\begin{multline}
\int_{B_r}|D^\alpha Tf|^pdx =\int_{B_r} |D^\alpha (\sum_{j=1}^s\psi_jT_j(f\psi_j))|^pdx\\
\le c\sum_{j\in \tilde S}\int_{B_r\cap V_j}\sum_{\gamma \le \alpha}|D^{\alpha-\gamma}\psi_jD^\gamma T_j(f\psi_j)|^pdx\\
\le c \sum_{j\in \tilde S}\int_{B_r}\sum_{\gamma \le \alpha }|D^\gamma T_j(f\psi_j)|^pdx\\
\le c \phi (r)  \sum_{j\in \tilde S}\sum_{0\le |\beta |\le |\alpha|} \| D^{\beta}f\|^p_{M^{\phi , \delta}_p(\Omega \cap V_j)}\\
\le c \tilde s \phi (r)  \sum_{0\le |\beta |\le |\alpha|} \| D^{\beta}f\|^p_{M^{\phi , \delta}_p(\Omega )},\\
\end{multline}

which implies the validity of (\ref{finalthm1intro}).

If $\Omega$ is bounded then $\tilde s\le s<\infty $, hence $\tilde s$ in the previous inequality can be replaced by $s$ which is independent of $\delta$, and (\ref{finalthm2}) follows.
\end{proof}

{\bf Acknowledgments} The authors are deeply indebted with Professor Victor I. Burenkov for suggesting the problem and useful discussions. 

The second authors acknowledges financial support from the research  projects `Singular perturbation problems for differential operators' Progetto di Ateneo of the University of Padova,
and  `INdAM GNAMPA Project 2015 - Un approccio funzionale analitico per problemi di perturbazione singolare e di
omogeneizzazione'. 

The  authors are also member of the Gruppo Nazionale per l'Analisi Ma\-te\-ma\-ti\-ca, la Probabilit\`{a} e le loro Applicazioni (GNAMPA) of the
Istituto Nazionale di Alta Matematica (INdAM).

Both authors acknowledge the warm hospitality received by each other's institution on the occasion of several research visits.

\end{document}